\newtheorem{theorem}{Theorem}[section]
\newtheorem{corollary}[theorem]{Corollary}
\newtheorem{lemma}[theorem]{Lemma}
\newtheorem{proposition}[theorem]{Proposition}
\newtheorem{remark}[theorem]{Remark}
\newcommand{\Neg}{\mathcal{N}}
\newcommand{\C}{\mathcal{C}}
\renewcommand{\S}{\mathcal{S}}
\newcommand{\SP}{\mathbb{U}}
\newcommand{\LL}{\mathcal{L}}
\newcommand{\F}{\mathcal{F}}
\newcommand{\x}{x}
\newcommand{\y}{y}
\newcommand{\z}{z}
\newcommand{\0}{0}
\newcommand{\kk}{a}
\newcommand{\aaa}{a}
\newcommand{\R}{\mathbb{R}}
\newcommand{\Gr}{G_{r}}
\newcommand{\Vr}{V}
\newcommand{\tildeVr}{\widetilde{V}}
\newcommand{\Wr}{W}
\newcommand{\rl}{(r-\LL)}
\newcommand{\hit}[1]{h_{#1}}
\newcommand{\indicator}[1]{\mathbf{1}_{#1}}
\newcommand{\g}{{g}}
\newcommand{\er}[1]{e^{-r{#1}}}
\newcommand{\states}{\mathbb{R}^d}
\newcommand{\Ex}[2]{\E_{#1}\left(#2\right)}
\def\P{\operatorname{\bf P}}
\def\Pro{\operatorname{\bf P}}
\def\E{\operatorname{\bf E}}
\def\conv{\operatorname{\rm conv}}
\title{On optimal stopping of multidimensional diffusions}
\author{S\"oren Christensen}
\address{S\"oren Christensen, Department of Mathematics, SPST, University of Hamburg, Bundesstra\ss e 55, D-20146 Hamburg, Germany}
\email{soeren.christensen@uni-hamburg.de}
\author{Fabi\'an Crocce}
\address{Fabi\'an Crocce, IMERL, Facultad de Ingenier\'ia, Universidad de la Rep\'ublica, Julio Herrera y Reissig 565, 11200, Montevideo, Uruguay}
\email{fcrocce@fing.edu.uy}
\author{Ernesto Mordecki}
\address{Ernesto Mordecki, Centro de Matematica, Facultad de Ciencias, Igua 4225, 11400, Montevideo, Uruguay}
\email{mordecki@cmat.edu.uy} 
\author{
Paavo Salminen}
\address{Paavo Salminen, \AA bo Akademi University, Faculty of Science
  and Engineering, FIN-20500 \AA bo, Finland}
    \email{phsalmin@abo.fi}
\subjclass[2010]{Primary: 60G40, 62L15}
\keywords{optimal stopping; multidimensional diffusions; Martin kernel; Green kernel; Helgasson support theorem; quadratic reward}
\begin{document}

 \begin{abstract}
This paper develops an approach for solving perpetual discounted optimal stopping problems
for multidimensional diffusions, with special emphasis on the $d$-dimensional Wiener process.
We first obtain some verification theorems for diffusions, based on the Green kernel representation
of the value function associated with the problem.
Specializing to the multidimensional Wiener process,
we apply the Martin boundary theory to obtain a set of tractable integral equations involving only harmonic functions
that characterize the stopping region of the problem.
These equations allow to formulate a discretization scheme to obtain an approximate solution.
The approach is illustrated through the optimal stopping problem of a $d$-dimensional Wiener process
with a positive definite quadratic form reward function.
 \end{abstract}
 \maketitle

\section{Introduction and preliminaries}
The theory of optimal stopping in continuous time is a well established and active area of research in stochastic processes.
Initiated in the field of sequential sampling in statistics, it receives a second important impulse with the emergence of mathematical finance and the problem of pricing American type contracts.
For the interested reader we refer to the monogrhaphs
by Shiryaev \cite{shiryaev:2008} and Peskir and Shiryaev \cite{peskir-shiryaev},
which seem to correspond respectively to these two periods of the topic.
It can be observed that, although the general theory of optimal stopping is developed for processes with general
state space, very few explicit solutions seem to be known in concrete examples for underlying multidimensional Markov processes,
which constitute the subject of the present paper.
There is also a relevant distinction between finite horizon and perpetual problems:
in the present work we focus our attention in perpetual problems.

When considering the dimensionality of an optimal stopping problem,
the first question to be answered is whether the problem has a representation in terms of
a one dimensional problem. Important cases of this situation are for instance the optimal stopping
of Bessel processes with integer index, that corresponds to the stopping of functions of the modulus of
multidimensional Brownian motion \cite{dubins-et-al}, or the classical one-asset-for-other problem
of the right to exchange one asset by another introduced by Margrabe \cite{margrabe}
whose American counterpart can also be solved by a change of measure \cite{gerber-shiu,fajardo-mordecki}
with the help of Girsanov's Theorem.
Another relevant example of this type of problem is the Russian options introduced by Shepp and Shiryaev in
\cite{shepp-shiryaev:1993},
that is in principle a two dimensional problem that can be reduced to a one dimensional problem after a convenient change of measure,
as exposed in \cite{shepp-shiryaev:1994},
a similar situation is the one considered for integral options in \cite{kramkov-mordecki:1994}.


In the present work, according to what has been observed above,
we are interested in the stopping of a multidimensional process when there is no available reduction.
This situation has been by far less explored in the literature.
In conclusion and in short, in this paper we concentrate on multidimensional perpetual discounted
optimal stopping problems for multidimensional diffusions, with special emphasis on the $d$-dimensional Wiener process.


Up to our knowledge,
there are a few references in this topic.
Hu and {\O}ksendal \cite{hu-oksendal} consider an investment problem originated in a portfolio of several correlated geometric Brownian motions.
Unfortunately, it turned out that the explicit solution given there holds only in trivial cases,
see Christensen and Irle \cite{christensen-irle} and Nishide and Rogers \cite{NR}.
This second reference discusses the more general problem of exchange of baskets, but only bounds for the solution are presented.
Villeneuve \cite{villeneuve} address the multidimensional case for finite time,
studying the non-emptiness of the stopping region and other properties of this set.
Christensen and Irle \cite{christensen-irle} propose a methodology to obtain information about the stopping region
for multidimensional problems based on harmonic functions. Another of the few attempts for infinite time horizon problems documented in the literature is an adhoc fix point method described for the optimal investment problem in \cite{christensen-salminen}. However, also that method seems to be restricted to some special subclasses of problems.
Crocce \cite{crocce} obtains some verification theorems for
multidimensional Markov processes, using the representation approach.



\subsection{Preliminaries and summary of the approach}\label{subsec:prel}

In the present paper, by a \emph{multidimensional diffusion} we understand a homogeneous nonterminating strong Markov process
$X=\{X_t\}_{t\geq 0}$
with continuous trajectories, defined on a filtered probability space
$(\Omega,\mathcal{F},\mathbb{F}=\{\mathcal{F}_t\}_{t\geq 0},\P)$ and taking values in $\states$.
For each $x\in\states$ we denote $\P_x(\cdot)=\P(\cdot\mid X_0=x)$.
We further assume that $X$ satisfies the Feller property (Proposition III.2.4. in \cite{revuz-yor}) and that the filtration
$\mathbb{F}$
is the result of the construction detailed in Proposition III.2.10 in \cite{revuz-yor}.
In particular (see Thm. III.2.17 in \cite{revuz-yor}), the hitting time of any Borel set is a stopping time.


Given a diffusion, a nonnegative and continuous reward function $g\colon\states\to[0,\infty)$,
and a positive discount rate $r>0$,
we consider a perpetual optimal stopping problem that consists in finding the value function $\Vr\colon\states\to[0,\infty)$
and an optimal stopping time $\tau^*$ such that
\begin{equation}\label{eq:osp}
\Vr(x)= \sup_{\tau}\Ex{x}{\er{\tau} \g(X_{\tau})}=\Ex{x}{\er{\tau^*} \g(X_{\tau^*})},
\end{equation}
where the supremum is taken over the class of all stopping times w.r.t. the filtration $\mathbb{F}$. In case $\tau=\infty$, we set $\er{\tau} \g(X_{\tau}):=\limsup_{t\rightarrow\infty}\er{t} \g(X_{t})$.
To guarantee finiteness of the value function, the following
\emph{growth condition} is often assumed to hold for all $x$
\begin{align}\label{eq:growth}
\E_x \left(\sup_{t\geq0}e^{-rt}g(X_t)\right)<\infty.
\end{align}
This is, however, often difficult to check for specific examples. We discuss more explicit conditions in our particular situation in Subsection \ref{subsec:finite}.
For general reference on optimal stopping see \cite{shiryaev:2008} and \cite{peskir-shiryaev}.


We now describe a free boundary approach to a solution and our contribution in this context.
From the general theory of Markovian stopping problems, the optimal stopping time $\tau^*$ -- if such a time exists -- can be characterized as the first entrance time into the stopping set
\[\S=\{x\in\states\colon \Vr(x)=g(x)\}.\]
Denoting by $\LL$ the infinitesimal generator of the process $X$,
the value function is furthermore known to fulfill $(r-\LL)\Vr(x)=0$
for $x$ in the continuation set $\C=\S^c$ and -- if $g$ is nice enough
-- it holds that $(r-\LL)\Vr(x)=(r-\LL)\g(x)$ for $x$ in the interior
of the stopping set $\S$. Therefore, if we can make sure that $V$ is
smooth enough to apply a version of Dynkin's formula also on the
boundary of $\Vr$ (\emph{smooth fit}), we obtain for all $t\geq
0$ and $x\in\states$ that 
\begin{align*}
\Vr(x)&=\Ex{x}{\er{t} \Vr(X_{t})}+\int_0^t\Ex{x}{\er{s} (r-\LL)\Vr(X_{s})}ds\\
&=\Ex{x}{\er{t} \Vr(X_{t})}+\int_0^t\Ex{x}{\er{s} (r-\LL)\g(X_{s})1_{\{X_s\in \S\}}}ds.
\end{align*}
Under suitable growth conditions,
letting $t\rightarrow \infty$ yields
\begin{align}\label{eq:value_repr}
\Vr(x)&=\int_0^\infty\Ex{x}{\er{s} (r-\LL)\g(X_{s})1_{\{X_s\in \S\}}}ds\nonumber\\
&=\int_{\S}G_r(\x,\y)(r-\LL)g(\y)d\y,
\end{align}
where $G_r$ denotes the  $r$-Green kernel and is given by 
\begin{align}\label{eq:green_measure}
\Gr(x,H):=\int_0^\infty \er{s} \P_x(X_t\in H)ds.
\end{align}
By applying the same reasoning to $\g$, we obtain -- on the other hand -- that
\[
\g(x)=\int_{\states}G_r(\x,\y)(r-\LL)g(\y)d\y.
\]
This equation is also known as the \emph{inversion formula}, and  states that the composition of the operators $r-\LL$ and $G_r$ results in the identity operator in a convenient domain
(see for instance Prop. 2.1 in \cite{ek}).
In conclusion -- under suitable assumptions on $g$ and $X$ --
we expect that the value function fulfills
	\begin{equation}\label{eq:char_green_all}
0=\Vr(x)-\g(x)=\int_{\S^c}G_r(\x,\y)(r-\LL)g(\y)d\y,\;\x\in \S.
\end{equation}
In particular, as $\S$ is known to be a closed set under standard assumptions, we obtain that
	\begin{equation}\label{eq:char_green}
\int_{\S^c}G_r(\x,\y)(r-\LL)g(\y)d\y=0,\;\x\in \partial\S,
\end{equation}
which can be interpreted as a (highly nonlinear) integral equation for the unknown boundary of the stopping set. In a next step, the idea is to find a suitable class $\SP$ of candidate sets $U$ to characterize the continuation set $U=\S^c$ as the unique solution to equation
	\begin{equation*}
\int_{U}G_r(\x,\y)(r-\LL)g(\y)d\y=0,\;\x\in \partial U,
\end{equation*}
 in the class $\SP$. We call \eqref{eq:char_green} the \emph{Green kernel equation} for the stopping set. 
 The previously described approach is nowadays standard in the solution of optimal stopping problems with a free-boundary approach, see \cite{peskir-shiryaev} for the relevant theory and a whole bunch of examples, mainly with a finite time horizon.

From a mathematical point of view,
this is a satisfying solution to the problem.
However,
from a practical point of view,
finding a numerical solution to set equations as \eqref{eq:char_green} does not seem to be standard.
One main problem is that the set of test points $x\in \partial U$ changes for each candidate set
$U$.
Furthermore, in many situations of interest, the Green kernel $\Gr(x,y)$ is not easy to handle as it has singularities, see below for the Wiener process case.

We now specialize to the Wiener process case.
In order to avoid evaluation at each point of the boundary of the (unknown) stopping set, we proceed inspired by the general Martin boundary theory. More precisely, we do not evaluate \eqref{eq:char_green_all} at the boundary of $S$, but take the limits when $\|x\|\rightarrow \infty$ along rays parametrized by a parameter ${\aaa}$ in the sphere with squared radius
$||{\aaa}||^2=2r$.
Then, for each $\aaa$,
there exists a function $h_a$ such that
$$
\frac{G_r(\x,\y)}{G_r(\x,\0)}\to h_\aaa(\y).
$$
It is more concretely proved in Section \ref{sec:Green_char} that $h_\aaa(y)=e^{{\aaa}\cdot\y}$ where $\cdot$ denotes the scalar product. Now, if $\S^c$ is bounded, we obtain from equation \eqref{eq:char_green_all} by taking the limit as above
	\begin{equation}\label{eq:char_exp}
\int_{\S^c}e^{{\aaa}\cdot\y}(r-\LL)g(\y)d\y=0,\text{ ${\aaa}\in\R^d$ such that $||{\aaa}||^2=2r$}.
\end{equation}
The new system \eqref{eq:char_exp} has many advantages over the Green
kernel equation \eqref{eq:char_green}. Firstly, the parameters $\x$ in
\eqref{eq:char_green}  depend directly on $\S$; the parameters ${\aaa}$ in \eqref{eq:char_exp} are the same for each choice of $\S$. Secondly, although the Green kernel $G_r$ can be found explicitly in the case of an underlying Wiener process (see in \eqref{eq12} below), the exponential functions $e^{{\aaa}\cdot\y}$ have the advantage that they do not have singularities, which makes numerical computations much more stable. We call \eqref{eq:char_exp} the \emph{Martin kernel equation} for the stopping set.

In conclusion, the aim of this paper is to develop the theory to use equation \eqref{eq:char_exp} to solve multidimensional optimal stopping problems.
For this, we use a multi-step procedure as follows:
\begin{enumerate}
	\item Establish the Green kernel equation \eqref{eq:char_green} for the optimal stopping set $\S$ of a multidimensional diffusion (see the discussion above and Section \ref{section:2}).
	\item Prove the uniqueness of the solution of the Green kernel equation in a suitable class of candidate sets $\SP$
	(Section \ref{sec:Green_char}).
	\item 
	When
	 the diffusion is the Wiener process, establish the Martin kernel equation \eqref{eq:char_exp} for the optimal stopping set $\S$
	 (Section \ref{sec:OS_Brown}), and its equivalence to the Green kernel equation.
\item Use step 2 to prove that $\S$ can be characterized as the unique solution to the Martin kernel equation in the class $\SP$
(Section \ref{sec:OS_Brown}).
\item Solve the Martin kernel equation \eqref{eq:char_exp} numerically (Section \ref{sec:numerics}), which gives an approximation of the optimal stopping time and -- via \eqref{eq:value_repr} -- also of the value function.
\end{enumerate}

Note that the first two steps (the Green kernel equation-approach) are only needed as auxiliary results in our approach. We indeed prove in Section \ref{sec:OS_Brown} that -- under certain assumptions -- the characterization of the unknown stopping set via the Green kernel characterizations holds if and only if the characterization via the Martin boundary (steps 3 and 4) holds.
The main contribution of this paper is the later characterization. The main tool we use to establish equivalence is the famous Helgason support theorem from the theory of Radon transforms, which seems to be new in this context.

Note that Martin boundary theory has already been used to solve optimal stopping problems before.
This approach was initiated by Salminen in \cite{salminen85}, where the value function
was represented as an integral of the Martin kernel for one dimensional diffusions, see also \cite{mordecki-salminen,crocce,crocce-mordecki} for a point of view from Riesz decomposition.
The theory presented here is, however, different in nature to the previous approaches.



\section{The Green kernel equation for diffusions}\label{section:2}

In this section we formulate a general verification theorem based on the Green kernel for a diffusion $X$.
Denote by $\hit{S}$ the hitting time of a Borel set $S\subseteq \states$, defined by
\begin{equation*}
\hit{S}:=\inf\{t\geq 0\colon X_t\in S\}.
\end{equation*}
where, as usual, we consider $\hit{S}=\infty$ if the set is empty.
We start by presenting some preliminary results for later reference. To enhance readability, the proofs of this section are given in the appendix.

\begin{lemma} \label{harmonic}
{Let $\tau$ be a stopping time.
Then, the Green measure introduced in \eqref{eq:green_measure} satisfies
\begin{equation*}
\Gr(x,H)=\Ex{x}{\er{\tau}\Gr(X_{\tau},H )}+\Ex{x}{\int_0^\tau\er{s}\indicator{H}(X_s)ds}.
\end{equation*}}
\end{lemma}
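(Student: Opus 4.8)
The plan is to derive the identity from the strong Markov property together with the additive structure of the occupation functional defining $\Gr$. Since the integrand is nonnegative, Tonelli's theorem first gives
\[
\Gr(x,H)=\int_0^\infty \er{s}\,\P_x(X_s\in H)\,ds=\Ex{x}{\int_0^\infty \er{s}\indicator{H}(X_s)\,ds},
\]
the interchange being legitimate because $(\omega,s)\mapsto \indicator{H}(X_s(\omega))$ is jointly measurable ($X$ has continuous paths and is progressively measurable for $\mathbb{F}$, and $H$ is Borel).

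Next I would split the time integral at $\tau$. On $\{\tau<\infty\}$, the substitution $s=\tau+u$ yields
\[
\int_0^\infty \er{s}\indicator{H}(X_s)\,ds=\int_0^\tau \er{s}\indicator{H}(X_s)\,ds+\er{\tau}\int_0^\infty \er{u}\indicator{H}(X_{\tau+u})\,du,
\]
while on $\{\tau=\infty\}$ the last term is absent and, since $r>0$, we have $\er{\tau}=0$ there, so it is consistent to set $\er{\tau}\Gr(X_\tau,H):=0$ on that event. Taking $\E_x$ of both sides and using Tonelli again, the claim reduces to showing
\[
\Ex{x}{\er{\tau}\int_0^\infty \er{u}\indicator{H}(X_{\tau+u})\,du}=\Ex{x}{\er{\tau}\Gr(X_\tau,H)}.
\]
For this I would condition on $\F_\tau$: the factor $\er{\tau}$ is $\F_\tau$-measurable and can be pulled out, and the strong Markov property (valid since $X$ is Feller, $\tau$ is an $\mathbb{F}$-stopping time, and $\mathbb{F}$ is the filtration of Prop.\ III.2.10 in \cite{revuz-yor}) applied to the nonnegative path functional $\omega\mapsto\int_0^\infty \er{u}\indicator{H}(X_u(\omega))\,du$ gives
\[
\E_x\!\left(\int_0^\infty \er{u}\indicator{H}(X_{\tau+u})\,du\;\Big|\;\F_\tau\right)=\E_{X_\tau}\!\left(\int_0^\infty \er{u}\indicator{H}(X_u)\,du\right)=\Gr(X_\tau,H)
\]
on $\{\tau<\infty\}$. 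Taking expectations and assembling the three displays proves the lemma.

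The computation is essentially routine; the only points needing care are the joint measurability required for the two applications of Tonelli and the bookkeeping on $\{\tau=\infty\}$, where the strict positivity of $r$ makes the discount factor — and hence both sides of the asserted identity — vanish. I do not anticipate a genuine obstacle.
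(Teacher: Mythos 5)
Your proposal is correct and follows essentially the same route as the paper: split the occupation-time integral at $\tau$, then apply the strong Markov property conditionally on $\F_\tau$ to identify the post-$\tau$ contribution with $\Ex{x}{\er{\tau}\Gr(X_\tau,H)}$. Your additional care with Tonelli and the event $\{\tau=\infty\}$ only makes explicit what the paper leaves implicit.
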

The following result shows that in our situation equation
\eqref{eq:char_green} implies equation \eqref{eq:char_green_all}.

\begin{lemma}
\label{multimainlemma}
Let $f\colon \states \to \R$ be a measurable function, $S\subseteq\states$ a Borel set,
and define the functions $\Wr,g\colon\states \to \R$ by
\begin{align}
\label{eq:defW}
\Wr(x)	&:=\int_{S} f(y) \Gr(x,dy),\\
g(x)	&:=\int_{\states} f(y) \Gr(x,dy).\notag
\end{align}
If $g(x)=\Wr(x)$ for all $x\in \partial S$, then, $g(x)=\Wr(x)$ for all $x\in S$.
Furthermore, $\Wr$ satisfies
\begin{equation} \label{eq:WA}
\Wr(x)=\Ex{x}{ e^{-r \hit{S}}W(X_{\hit{S}})}=\Ex{x}{ e^{-r \hit{S}}g(X_{\hit{S}})}.
\end{equation}
\end{lemma}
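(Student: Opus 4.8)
The plan is to reduce the whole statement to Lemma~\ref{harmonic}, by integrating the Green--measure identity there against $f$ and choosing the stopping time to be a hitting time, arranged so that exactly one of the two terms survives. \emph{Step 1.} Integrating the identity of Lemma~\ref{harmonic} in the variable $y$ against $f$ over $S$ and interchanging the order of integration (legitimate because $\Gr(x,\states)=1/r<\infty$ by \eqref{eq:green_measure}, so all measures involved are finite and $f$ is assumed $\Gr(x,\cdot)$--integrable), I would obtain, for any stopping time $\tau$,
\[
W(x)=\Ex{x}{\er{\tau}W(X_\tau)}+\Ex{x}{\int_0^\tau\er{s}f(X_s)\indicator{S}(X_s)\,ds}.
\]
Choosing $\tau=\hit{S}$ kills the last term, since $X_s\notin S$ for every $s<\hit{S}$ (otherwise $\hit{S}\le s$), and this already gives the first equality in \eqref{eq:WA},
\[
W(x)=\Ex{x}{\er{\hit{S}}W(X_{\hit{S}})},\qquad x\in\states,
\]
with the term read as $0$ on $\{\hit{S}=\infty\}$ (automatic here, since $W$ is bounded when $f$ is).

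\emph{Step 2.} Put $u:=g-W=\int_{S^c}f(y)\,\Gr(\cdot,dy)$. Running the same computation with $S$ replaced by $S^c$ and $\tau=\hit{S^c}$, the corresponding last term vanishes because now $X_s\in S$ for $s<\hit{S^c}$; hence $u(x)=\Ex{x}{\er{\hit{S^c}}u(X_{\hit{S^c}})}$ for every $x$. For $x$ in the interior $S^{\circ}$ I would argue that $\hit{S^c}>0$ $\P_x$-a.s.\ (the trajectory stays in a ball around $x$, hence in $S$, for a positive time) and that, by continuity of the trajectories, $X_{\hit{S^c}}\in\overline{S}\cap\overline{S^c}=\partial S$ on $\{\hit{S^c}<\infty\}$; since $u\equiv 0$ on $\partial S$ by hypothesis, this forces $u(x)=0$. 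Combining this with $u\equiv 0$ on $\partial S$ (hypothesis) and the inclusion $S\subseteq S^{\circ}\cup\partial S$, I conclude $g=W$ on all of $S$, which is the first assertion.

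\emph{Step 3.} By Step~2 together with the hypothesis, $g=W$ on $S\cup\partial S=\overline{S}$. On $\{\hit{S}<\infty\}$, continuity of $X$ gives $X_{\hit{S}}\in\overline{S}$ (approximate $\hit{S}$ from the right by times at which $X\in S$), while on $\{\hit{S}=\infty\}$ the factor $\er{\hit{S}}$ is $0$; hence $W(X_{\hit{S}})=g(X_{\hit{S}})$ $\P_x$-a.s.\ in the identity of Step~1, which upgrades it to $W(x)=\Ex{x}{\er{\hit{S}}g(X_{\hit{S}})}$ and completes \eqref{eq:WA}.

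The routine pieces are the Fubini interchange and the behaviour on $\{\tau=\infty\}$, both harmless thanks to $\Gr(x,\states)=1/r<\infty$. The delicate part, which I expect to be the main obstacle, is Step~2 --- passing from ``$u$ harmonic and vanishing on $\partial S$'' to ``$u\equiv 0$ on $S^{\circ}$'' --- where one must combine the strong Markov/hitting-time structure with the topology of $S$, in particular the facts that $\hit{S^c}>0$ and $X_{\hit{S^c}}\in\partial S$ whenever the process starts in $S^{\circ}$.
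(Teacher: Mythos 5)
Your proposal is correct and follows essentially the same route as the paper: both apply the decomposition of Lemma~\ref{harmonic} integrated against $f$, take $\tau=\hit{S^c}$ so that the time-integral term drops for $x\in S$, and use path continuity to place $X_{\hit{S^c}}$ on $\partial S$ where the hypothesis forces the remaining term to vanish; \eqref{eq:WA} is then the same computation with $\tau=\hit{S}$. Your write-up is in fact somewhat more explicit than the paper's about the Fubini step, the behaviour on $\{\hit{S}=\infty\}$, and the topological argument $X_{\hit{S^c}}\in\overline{S}\cap\overline{S^c}=\partial S$.
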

Using these facts, we can establish a general verification result in terms of Green kernels.
\begin{theorem} \label{theorem:general}
Assume that there exists $f\colon \states \to  \R$ such that
\begin{equation*}
g(x)=\int_{\states} f(y)\Gr(x,dy)\text{ for all $x \in \states$}.
\end{equation*}
Consider a Borel set $S$ such that $f(x)\geq 0$ for all $x \in S$.
Assume that $\tildeVr\colon\states \to \R$ defined by
\begin{equation}
\label{rie}
\tildeVr(x):=\int_{S} f(y)\Gr(x,dy),
\end{equation}
satisfies the following two conditions:
\begin{itemize}
\item[] $\tildeVr(x)\geq g(x)$ for all $x\in \states \setminus S$,
\item[]
$\tildeVr(x)=g(x)$ for all $x \in \partial S$ { or equivalently
\begin{align}\label{eq:boundary0}
\int_{S^c} f(y)\Gr(x,dy)=0\mbox{ for all $x \in \partial S$.}
\end{align}
}
\end{itemize}
Then, 
$\tau^*=\hit{S}$ is an optimal stopping time
and $\tildeVr$ is the value function of the problem \eqref{eq:osp}.
\end{theorem}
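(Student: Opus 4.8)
The plan is a classical verification argument: show that $\tildeVr$ is a finite, nonnegative, $r$-excessive majorant of $g$ whose value is attained by the candidate stopping time $\hit{S}$; the two inequalities $\Vr\le\tildeVr$ and $\tildeVr\le\Vr$ then yield both assertions. I would begin with finiteness and sign: since $g(x)=\int_{\states}f\,d\Gr(x,\cdot)$ is a nonnegative real number, $\int_{\states}f^-\,d\Gr(x,\cdot)<\infty$ and hence $\int_{\states}f^+\,d\Gr(x,\cdot)<\infty$ as well, so $f\in L^1(\Gr(x,\cdot))$; then $\tildeVr(x)=\int_S f^+\,d\Gr(x,\cdot)$ is finite and nonnegative, using that $f\ge0$ on $S$ kills $f^-$ there. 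For domination, I would apply \autoref{multimainlemma} with $\Wr=\tildeVr$: the assumed identity $\tildeVr=g$ on $\partial S$ propagates to $\tildeVr=g$ on $S$ (hence on $\overline S$), and together with the assumed inequality $\tildeVr\ge g$ on $\states\setminus S$ this gives $\tildeVr\ge g$ on $\states$.

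The heart of the proof is $r$-excessivity. Integrating the identity of \autoref{harmonic} against the finite measure $f(y)\indicator{S}(y)\,dy$ — splitting $f=f^+-f^-$ and applying Fubini--Tonelli to each part — gives, for any stopping time $\tau$,
\begin{equation*}
\tildeVr(x)=\Ex{x}{\er{\tau}\tildeVr(X_{\tau})}+\Ex{x}{\int_0^{\tau}\er{s}f(X_s)\indicator{S}(X_s)\,ds}\ \ge\ \Ex{x}{\er{\tau}\tildeVr(X_{\tau})},
\end{equation*}
the inequality because $f\ge0$ on $S$. Taking $\tau\equiv t$ deterministic and using the Markov property shows that $\{\er{t}\tildeVr(X_t)\}_{t\ge0}$ is a nonnegative $(\P_x,\mathbb{F})$-supermartingale, hence converges $\P_x$-a.s.\ to some $L\ge0$.

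For the upper bound, fix a stopping time $\tau$. Applying the inequality above with $\tau\wedge t$ and letting $t\to\infty$ via Fatou (no uniform integrability is needed for nonnegative supermartingales) yields
\begin{equation*}
\Ex{x}{\er{\tau}\tildeVr(X_{\tau})\indicator{\{\tau<\infty\}}}+\Ex{x}{L\,\indicator{\{\tau=\infty\}}}\ \le\ \tildeVr(x).
\end{equation*}
Since $g\le\tildeVr$ everywhere, $\er{t}g(X_t)\le\er{t}\tildeVr(X_t)$, so on $\{\tau=\infty\}$ one has $\er{\tau}g(X_{\tau})=\limsup_{t\to\infty}\er{t}g(X_t)\le L$, while on $\{\tau<\infty\}$, $g(X_{\tau})\le\tildeVr(X_{\tau})$. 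Hence $\Ex{x}{\er{\tau}g(X_{\tau})}\le\tildeVr(x)$ for every $\tau$, i.e.\ $\Vr(x)\le\tildeVr(x)$. For the reverse inequality, equation \eqref{eq:WA} of \autoref{multimainlemma} reads $\tildeVr(x)=\Ex{x}{\er{\hit{S}}g(X_{\hit{S}})}$, so $\tildeVr(x)\le\Vr(x)$. Therefore $\tildeVr=\Vr$ and the supremum in \eqref{eq:osp} is attained at $\tau^*=\hit{S}$.

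I expect the delicate points to be the behaviour ``at $t=\infty$'' together with the integrability used along the way: justifying the interchange of integral and expectation when deriving the excessivity identity from \autoref{harmonic} (this is exactly where $f\in L^1(\Gr(x,\cdot))$ enters), identifying the almost sure limit $L$ of the supermartingale, and extending the optional sampling inequality to the event $\{\tau=\infty\}$ — the last being what makes the convention $\er{\tau}g(X_{\tau}):=\limsup_{t\to\infty}\er{t}g(X_t)$ harmless in the supremum. The remaining algebra is a direct rearrangement of \autoref{harmonic} and \autoref{multimainlemma}.
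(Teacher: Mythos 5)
Your proof is correct and follows essentially the same strategy as the paper's: use Lemma \ref{multimainlemma} to show that $\tildeVr$ is a majorant of $g$ satisfying $\tildeVr(x)=\Ex{x}{\er{\hit{S}}g(X_{\hit{S}})}\le \Vr(x)$, and combine this with the $r$-excessivity of $\tildeVr$ (coming from $f\ge 0$ on $S$) to get the reverse inequality $\Vr\le\tildeVr$. The only difference is that where the paper invokes the Riesz representation and ``the general theory'' for the step $\Vr\le\tildeVr$, you unpack it into an explicit supermartingale/Fatou argument via Lemma \ref{harmonic}, including the treatment of $\{\tau=\infty\}$ consistent with the paper's $\limsup$ convention --- a more self-contained rendering of the same idea.
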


In principle, the following corollary provides a practical way of using  \autoref{theorem:general},
based on the notion of extended infinitesimal generator (see \cite[Chap. VII, Sect. 1]{revuz-yor}). Let us, however, mention that the usability of the verification results obtained here is limited in a multidimensional setting as it is not easy to guess a suitable candidate. As outlined above,
one standard way to establish the Green kernel equation is to establish smooth fit condition to be able to apply a suitable version of Dynkin's lemma. This must, however, be proved on a case-by-case basis, see \cite{christensen-salminen} (or -- in the finite time case -- \cite{peskir-shiryaev}) for examples.

\begin{corollary}
\label{mainmulti}
Assume that $g\colon\states \to \R$ belongs to the domain
of the extended infinitesimal generator associated with the $r$-killed process,
that the growth condition \eqref{eq:growth} holds,
and also that
$$
\Ex{x}{\int_0^{\infty} \er{s}|\rl g(X_s)| ds}<\infty.
$$
Suppose that $S$ is a measurable set such that 
$\rl g(x)\geq 0$ for all $x \in S$
and define $\tildeVr\colon\states \to \R$ by
\begin{equation*}
\tildeVr(x):=\int_{S} \rl g(y)\Gr(x,dy).
\end{equation*}
Moreover, assume that the following two conditions hold:
\begin{itemize}
\item[] $\tildeVr(x)\geq g(x)$ for all $x\in \states \setminus S$,
\item[] $\tildeVr(x)=g(x)$ for all $x \in \partial S$.
\end{itemize}
Then, $S$ is an optimal stopping region, $\tau^*=\hit{S}$ is an optimal stopping time,
and $\tildeVr$ is the value function of the problem \eqref{eq:osp}.
\end{corollary}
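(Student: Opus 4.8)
The plan is to derive \autoref{mainmulti} directly from \autoref{theorem:general} by taking for the function $f$ there the choice $f:=\rl g$, i.e.\ the image of $g$ under the extended infinitesimal generator of the $r$-killed process (this is a Borel function by the very definition of that generator, so the measurability requirement of \autoref{theorem:general} is met). With $f=\rl g$, the hypothesis ``$f(x)\ge 0$ for all $x\in S$'' is exactly the assumed sign condition $\rl g\ge 0$ on $S$; the function $\tildeVr$ defined in \eqref{rie} is precisely the one appearing in the corollary; and the two bullet-point conditions of \autoref{theorem:general} ($\tildeVr\ge g$ off $S$, $\tildeVr=g$ on $\partial S$) are verbatim those assumed here. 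Thus the whole statement reduces to checking the single remaining hypothesis of \autoref{theorem:general}, namely the \emph{inversion formula}
\begin{equation*}
g(x)=\int_{\states}\rl g(y)\,\Gr(x,dy)\qquad\text{for all }x\in\states ,
\end{equation*}
and this is the point at which the three standing assumptions (domain of the extended generator, growth condition, integrability of $\rl g$) are needed.

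To obtain the inversion formula I would use Dynkin's formula for the $r$-killed process. Its extended generator acts as $\LL-r$, so the assumption on $g$ means that
\[
M_t:=\er{t}g(X_t)-g(x)+\int_0^t\er{s}\,\rl g(X_s)\,ds
\]
is a local martingale under every $\P_x$. Picking a localizing sequence $\sigma_n\uparrow\infty$, taking expectations in $M_{t\wedge\sigma_n}=M_0=0$ and rearranging yields
\[
\Ex{x}{\er{t\wedge\sigma_n}g(X_{t\wedge\sigma_n})}=g(x)-\Ex{x}{\int_0^{t\wedge\sigma_n}\er{s}\,\rl g(X_s)\,ds}.
\]
Fix $t$ and let $n\to\infty$: on the left, $\er{t\wedge\sigma_n}g(X_{t\wedge\sigma_n})\to\er{t}g(X_t)$ $\P_x$-a.s.\ by continuity of the paths of $X$ and of $g$, with integrable majorant $\sup_{s\ge 0}\er{s}g(X_s)$ supplied by the growth condition \eqref{eq:growth}; on the right, the integrand is majorized by $\int_0^\infty\er{s}|\rl g(X_s)|\,ds$, integrable by assumption. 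Dominated convergence then gives, for every $t\ge 0$,
\[
\Ex{x}{\er{t}g(X_t)}=g(x)-\Ex{x}{\int_0^{t}\er{s}\,\rl g(X_s)\,ds}.
\]

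It remains to let $t\to\infty$. The integral term converges, again by dominated convergence with the same majorant, and by Tonelli together with the definition \eqref{eq:green_measure} of $\Gr$ its limit is $\int_{\states}\rl g(y)\,\Gr(x,dy)$; hence the inversion formula follows once one shows that $\er{t}g(X_t)\to 0$ $\P_x$-a.s.\ (whence $\Ex{x}{\er{t}g(X_t)}\to 0$, once more by \eqref{eq:growth} and dominated convergence). \textbf{This last a.s.\ vanishing is the step I expect to be the main obstacle.} It is the classical ``potential-type'' hypothesis under which the inversion/Riesz representation of $g$ is valid — the conditions imposed here are exactly those of, e.g., Prop.~2.1 in \cite{ek} — and a self-contained argument should exploit \eqref{eq:growth} through the nonnegative supermartingale $\er{t}\,\Ex{X_t}{\sup_{s\ge0}\er{s}g(X_s)}$, whose expectation decreases to $\Ex{x}{\limsup_{t\to\infty}\er{t}g(X_t)}$; one then has to rule out that this limit is strictly positive, i.e.\ that $\er{t}g(X_t)$ oscillates without decaying, which is where \eqref{eq:growth} must be used decisively and where I expect the argument to require the most care. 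Granting the inversion formula, \autoref{theorem:general} applied with $f=\rl g$ immediately gives that $\tau^*=\hit{S}$ is an optimal stopping time and that $\tildeVr$ is the value function of \eqref{eq:osp}; in particular $S$ is an optimal stopping region, which completes the proof.
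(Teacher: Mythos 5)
Your proposal is correct and follows essentially the same route as the paper: both reduce the corollary to Theorem~\ref{theorem:general} with $f:=\rl g$, the only substantive point being the inversion formula $g(x)=\int_{\states}\rl g(y)\,\Gr(x,dy)$. The step you flag as the main obstacle (showing $\Ex{x}{\er{t}g(X_t)}\to 0$) is precisely the content of the ``Dynkin characterization'' result that the paper does not prove either but imports by citation (1.2.1 in \cite{crocce}, cf.\ Prop.~2.1 in \cite{ek}), so your argument is at the same level of completeness as the paper's.
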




 \subsection{Uniqueness of the solution}\label{sec:Green_char}


We now characterize the unknown stopping set $\S$ as the unique solution to the Green kernel equations
\eqref{eq:char_green} in a convenient class of sets.
The proof follows the line of argument for corresponding finite time problems as applied in \cite{peskir-shiryaev} for many different examples.
 Here, we give a general result in the setup of Theorem \ref{theorem:general}:
\begin{theorem}\label{thm:uniqueness_green}
Assume that there exists a continuous function $f\colon \states \to
\R$ such that 
\begin{equation}\label{eq:representation}
g(x)=\int_{\states} f(y)\Gr(x,dy),\text{ for all $x \in \states,$}
\end{equation}
and that the value function $\Vr\colon\states \to \R$ is of the form as in Theorem \ref{theorem:general}, i.e.
\begin{equation*}
\Vr(x)=\int_{\S} f(y)\Gr(x,dy),
\end{equation*}
so that the hitting time of $\S$ is optimal.
Furthermore, let $\SP$ be a class of nonempty closed subsets of
$\states$ containing $\S$ such that
\begin{enumerate}[\rm (a)]
	\item\label{class:a} $f(y)>0$ for all $U\in \SP$ and $y\in U$.
	\item\label{class:b} For all $U,U'\in\SP$ the following implications hold true:
	\begin{enumerate}[\rm (i)]
		\item If $\P_x(\lambda(\{t\leq h_{U^c}:X_t\not\in U'\})=0)=1$ for all $x\in U\cap U'$, then $U\subseteq U'$. Here, $\lambda$ denotes the Lebesgue measure.
		\item If $\P_x(\lambda(\{t\leq h_{U}:X_t\in U'\})=0)=1$ for all $x\in U^c$, then $U'\subseteq U$.
	\end{enumerate}
\end{enumerate}
Then $U=\S$ is the unique set in the class $\SP$ which satisfies 
\begin{equation}\label{eq:assume}
\int_{U^c} f(y)\Gr(x,dy)=0\mbox{ for all }x\in\partial U.
\end{equation}
\end{theorem}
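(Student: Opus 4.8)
The plan is to show that any $U\in\SP$ satisfying \eqref{eq:assume} must coincide with $\S$, by proving the inclusions $\S\subseteq U$ and $U\subseteq\S$ separately; each will be obtained from one of the two implications assumed in part (b) of the statement, applied with the pair $(\S,U)$ in the roles of $(U,U')$, and fed with an occupation‑time identity that pins down where the process may spend positive time. Write $W_U(x):=\int_U f(y)\,\Gr(x,dy)$, so that $g(x)=\int_{\states}f(y)\,\Gr(x,dy)$ by \eqref{eq:representation}. First note that $\S$ itself solves \eqref{eq:assume}, since $\Vr=g$ on $\S\supseteq\partial\S$ forces $\int_{\S^c}f(y)\,\Gr(x,dy)=g(x)-\Vr(x)=0$ on $\partial\S$; thus the assertion is not vacuous. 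For the fixed $U$, condition \eqref{eq:assume} together with \eqref{eq:representation} says $g=W_U$ on $\partial U$, so \autoref{multimainlemma} (with $S=U$) yields $g=W_U$ on all of $U$ and the representation $W_U(x)=\Ex{x}{\er{\hit U}g(X_{\hit U})}$ for every $x$. Since $\hit U$ is an admissible stopping time, the latter gives the suboptimality bound $W_U\le\Vr$ everywhere; combined with $\Vr=g$ on $\S$, we obtain the key inequality $W_U\le g$ on $\S$. The second ingredient is the Dynkin‑type identity obtained by integrating \autoref{harmonic} against $f(y)\,dy$ (a Fubini argument licensed by the standing integrability assumptions): for every stopping time $\tau$,
\begin{align*}
g(x)&=\Ex{x}{\er{\tau}g(X_{\tau})}+\Ex{x}{\int_0^{\tau}\er{s}f(X_s)\,ds},\\
W_U(x)&=\Ex{x}{\er{\tau}W_U(X_{\tau})}+\Ex{x}{\int_0^{\tau}\er{s}f(X_s)\indicator{U}(X_s)\,ds},
\end{align*}
where the first expectation on each line is read as zero on $\{\tau=\infty\}$ (here $\er{\infty}=0$ and $\Gr(\cdot,\states)\equiv 1/r$).

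To prove $\S\subseteq U$, fix $x\in\S\cap U$ and apply both identities with $\tau=\hit{\S^c}$, the hitting time of $\S^c$. For $s<\hit{\S^c}$ one has $X_s\in\S$, hence $f(X_s)>0$ by hypothesis (a); and on $\{\hit{\S^c}<\infty\}$ path continuity and closedness of $\S$ give $X_{\hit{\S^c}}\in\partial\S\subseteq\S$. Subtracting the two identities and using $g(x)=W_U(x)$ (valid since $x\in U$) leaves
\[
0=\Ex{x}{\er{\hit{\S^c}}\,(g-W_U)(X_{\hit{\S^c}})\,\indicator{\{\hit{\S^c}<\infty\}}}+\Ex{x}{\int_0^{\hit{\S^c}}\er{s}f(X_s)\indicator{U^c}(X_s)\,ds}.
\]
Both summands are nonnegative --- the first because $g-W_U\ge0$ on $\S$, the second because $f(X_s)>0$ for $s<\hit{\S^c}$ --- so both vanish. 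Vanishing of the second, together with $\er{s}f(X_s)>0$ on $\{s<\hit{\S^c}\}$, gives $\P_x$‑a.s.\ that $X_s\in U$ for a.e.\ $s\le\hit{\S^c}$, i.e.\ $\P_x\big(\lambda(\{t\le\hit{\S^c}:X_t\notin U\})=0\big)=1$. As $x\in\S\cap U$ was arbitrary, assumption (b)(i), applied with $(\S,U)$ in the roles of $(U,U')$, gives $\S\subseteq U$.

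For the reverse inclusion, use $\S\subseteq U$ to write $\indicator U=\indicator{\S}+\indicator{U\setminus\S}$, whence $W_U(x)=\Vr(x)+\int_{U\setminus\S}f(y)\,\Gr(x,dy)\ge\Vr(x)$, since $f>0$ on $U\setminus\S\subseteq U$ by hypothesis (a) and $\Gr\ge0$. With $W_U\le\Vr$ this forces $W_U=\Vr$, hence $\int_{U\setminus\S}f(y)\,\Gr(x,dy)=0$ for all $x$; as $f>0$ on $U\setminus\S$, this yields $\Gr(x,U\setminus\S)=\Ex{x}{\int_0^{\infty}\er{s}\indicator{U\setminus\S}(X_s)\,ds}=0$ for all $x$, and therefore $\P_x\big(\lambda(\{t\ge0:X_t\in U\setminus\S\})=0\big)=1$. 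In particular, for $x\in\S^c$, since $U\cap\S^c=U\setminus\S$, this reads $\P_x\big(\lambda(\{t\le\hit\S:X_t\in U\})=0\big)=1$, so assumption (b)(ii), applied with $(\S,U)$ in the roles of $(U,U')$, gives $U\subseteq\S$. Combining the two inclusions, $U=\S$, and uniqueness follows.

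\textbf{The main obstacle.} The delicate step is $\S\subseteq U$: its proof rests on the cancellation‑free identity ``$0=(\ge0)+(\ge0)$'', which closes only because the boundary term has the right sign (thanks to $W_U\le\Vr=g$ on $\S$, i.e.\ suboptimality of $\hit U$) \emph{and} the occupation integrand has the right sign (thanks to the strict positivity $f>0$ on $\S$ in hypothesis (a)); neither fact alone suffices. One must also dispatch the event $\{\hit{\S^c}=\infty\}$ and justify the Fubini interchange in the Dynkin identities, both of which are covered by the integrability hypotheses already present in \autoref{theorem:general}.
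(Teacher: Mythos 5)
Your proof is correct. The inclusion $\S\subseteq U$ follows the paper's argument almost verbatim: the same occupation-time identities obtained by integrating Lemma \ref{harmonic} against $f(y)\,dy$, the same inequality $W_U\le \Vr$ derived from \eqref{eq:WA} in Lemma \ref{multimainlemma}, and the same appeal to hypothesis (b)(i); your packaging as ``$0=(\ge 0)+(\ge 0)$'' is just the paper's Step 2 subtraction with the boundary term made explicit (the paper instead bounds $\Ex{x}{\er{\gamma}W(X_\gamma)}\le\Ex{x}{\er{\gamma}\Vr(X_\gamma)}$ and subtracts the identity for $\Vr$, which amounts to the same sign argument). For the reverse inclusion $U\subseteq\S$ you genuinely deviate: the paper evaluates the occupation identity for $W_U$ at $\tau^*=h_{\S}$ starting from $x\in\S^c$ and invokes the optimality of $\tau^*$ once more to force $\Ex{x}{\int_0^{\tau^*}\er{s}1_U(X_s)f(X_s)\,ds}=0$, whereas you sandwich $\Vr\le W_U\le \Vr$ (using $\S\subseteq U$ from the first part and $f>0$ on $U$) to conclude $\Gr(x,U\setminus\S)=0$ for every $x$. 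Your variant is marginally stronger -- the process a.s.\ spends zero time in $U\setminus\S$ globally, not merely before $h_{\S}$ -- and it avoids both the second use of optimality and the paper's slightly awkward identification of $W(X_{\tau^*})$ with $g(X_{\tau^*})$; either route delivers the hypothesis of (b)(ii) with $(\S,U)$ in the roles of $(U,U')$. The only implicit assumption you share with the paper is the finiteness of $W_U$ and of the occupation integrals needed for the Fubini interchange, which is part of the standing integrability framework of Theorem \ref{theorem:general}.
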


\begin{proof}
Let $U\in \SP$ such that \eqref{eq:assume} holds and define
\[
W(x):=\int_Uf(y)\Gr(x,dy).
\]
Note that by \eqref{eq:representation} and \eqref{eq:assume} we have $W(x)=g(x)\mbox{ for all }x\in\partial U$ and by Lemma \ref{multimainlemma} we also have
$W(x)=g(x)\mbox{ for all }x\in U$.
The rest of the proof is divided into three steps.
\vskip1mm\par\noindent
\textit{Step 1.} 
We first prove that $W(x)\leq \Vr(x)$ for all $x$:\\
For $x\in U$, this holds as $W(x)=g(x)\leq V(x)$ as $V$ majorizes $g$. Now, let $x\in U^c$.
Then, as in \eqref{eq:WA} in Lemma \ref{multimainlemma}, we obtain
\begin{align*}
W(x)
=\Ex{x}{\er{h_U}W(X_{h_U})}=\Ex{x}{\er{h_U}g(X_{h_U})}\leq \Vr(x),
\end{align*}
because $\P_x(X_{h_U}\in \partial U)=1$.
\vskip1mm\par\noindent
\textit{Step 2.} 
Let $x\in \S\cap U$ and $\gamma:=h_{\S^c}$. Using Lemma \ref{harmonic} and step 1 we have
\begin{align*}
g(x)=W(x)&=\Ex{x}{\er{\gamma}W(X_{\gamma})}+\Ex{x}{\int_0^{\gamma}\er{s}1_U(X_s)f(X_s)ds}\\
&\leq \Ex{x}{\er{\gamma}V(X_{\gamma})}+\Ex{x}{\int_0^{\gamma}\er{s}1_U(X_s)f(X_s)ds}
\end{align*}
and on the other hand
\begin{align*}
g(x)=V(x)&=\Ex{x}{\er{\gamma}V(X_{\gamma})}+\Ex{x}{\int_0^{\gamma}\er{s}1_\S(X_s)f(X_s)ds}\\
&=\Ex{x}{\er{\gamma}V(X_{\gamma})}+\Ex{x}{\int_0^{\gamma}\er{s}f(X_s)ds}.
\end{align*}
Subtracting these formulas yields
\begin{align*}
0&\geq \Ex{x}{\int_0^{\gamma}\er{s}f(X_s)ds}-\Ex{x}{\int_0^{\gamma}\er{s}1_U(X_s)f(X_s)ds}\\
&=\Ex{x}{\int_0^{\gamma}\er{s}1_{U^c}(X_s)f(X_s)ds}.
\end{align*}
As $f(X_s)>0$ in the integral above by assumption \eqref{class:a}, we obtain that
\[\P_x(\lambda(\{t\leq \gamma:X_t\in U^c\})=0)=1\]
which implies $\S\subseteq U$ by assumption (\ref{class:b}(i)).
\vskip1mm\par\noindent
\textit{Step 3.}
Now assume that $x\in \S^c$. Recall that $\tau^*=\inf\{t:X_t\in \S\}$ is the optimal stopping time. Using Lemma \ref{harmonic} and step 1 
again, we obtain
\begin{align*}
V(x)\geq W(x)&=\Ex{x}{\er{\tau^*}W(X_{\tau^*})}+\Ex{x}{\int_0^{\tau^*}\er{s}1_U(X_s)f(X_s)ds}.
\end{align*}
As $\S\subseteq U$ by the previous step,$~X_{\tau_U}\in \partial U$ and $W=g$ in $U$, we deduce
\begin{align*}
V(x)&\geq \Ex{x}{\er{\tau^*}g(X_{\tau^*})}+\Ex{x}{\int_0^{\tau^*}\er{s}1_U(X_s)f(X_s)ds}\\
&=V(x)+\Ex{x}{\int_0^{\tau^*}\er{s}1_U(X_s)f(X_s)ds},
\end{align*}
where the optimality of $\tau^*$ is also used. Again, as $f(X_s)>0$ in the integral above by assumption \eqref{class:a}, we obtain that
\[\P_x(\lambda(\{t\leq \tau^*:X_t\in U\})=0)=1,\]
which yields that $U\subseteq \S$ by assumption (\ref{class:b}(ii)). Together with the previous step, we have $U=\S$ as claimed.
This concludes the proof.
\end{proof}


{

\section{The Martin kernel equation for the Wiener process}\label{sec:OS_Brown}}
%
%
%
 \subsection{Martin boundary for the $d$-dimensional exponentially killed Wiener process}
From now on, the underlying diffusion is a standard $d$-dimensional Wiener process
$W=\{W_t\colon t\geq 0\}$.
Recall that the transition density of $W$ is given by
\begin{equation}
\label{eq11}
p(t;\x,\y)=(2\pi t)^{-d/2}\exp\left(-\frac{||\x-\y||^2}{2t}\right),
\end{equation}
and -- using formula (29) in Erdelyi et al. \cite{erdelyi} p. 146 -- the
Green kernel is given by
\begin{align}
\label{eq12}
\nonumber
G_r(\x,\y)&= \int_0^\infty e^{-rt}p(t;\x,\y)dt \\
&=(2\pi)^{-d/2}2\left(\frac{||\x-\y||^2}{2r}\right)^{(2-d)/4}K_{(2-d)/2}
\left(||\x-\y||\sqrt{2r}\right),
\end{align}
where $r\geq 0$ (strictly positive in case $d=1$ or $2$) and $K_\nu$ is the modified Bessel
function of the second kind as defined in Lebedev \cite{lebedev} p. 109 and called the Macdonald function. Recall (see, e.g.,
Lebedev \cite{lebedev} p. 136) that for all
$\nu\in\R$
\begin{equation}
\label{eq13}
\lim_{u\to\infty}\sqrt{u}\, {\rm e}^u\, K_\nu(u)=\sqrt{\pi/2}.
\end{equation}
\begin{lemma}
\label{lem11}
For $\y$ fixed, $d\geq 2,$ and all $||\x||$ large enough it holds
$$
\frac{G_r(\x,\y)}{G_r(\x,\0)}\leq 2^{(d+3)/2}\,
\exp\left(\sqrt{2r}\,||\y||\right).
$$
\end{lemma}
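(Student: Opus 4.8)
The plan is to read off the $\x$-dependence directly from the closed form \eqref{eq12}. Fix $\y$, set $\nu:=(2-d)/2$, and write $u:=||\x-\y||\sqrt{2r}$, $v:=||\x||\sqrt{2r}$. Then all but two factors in \eqref{eq12} cancel and
\[
\frac{\Gr(\x,\y)}{\Gr(\x,\0)}=\left(\frac{||\x-\y||}{||\x||}\right)^{\nu}\,\frac{K_\nu(u)}{K_\nu(v)}.
\]
Since $\y$ is fixed, $\bigl|\,||\x-\y||-||\x||\,\bigr|\leq||\y||$ by the triangle inequality; in particular $u,v\to\infty$ (here one uses $r>0$) and $u/v\to1$ as $||\x||\to\infty$.

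The heart of the argument is to control the ratio of Macdonald functions. I would write
\[
\frac{K_\nu(u)}{K_\nu(v)}=\frac{\sqrt{u}\,e^{u}K_\nu(u)}{\sqrt{v}\,e^{v}K_\nu(v)}\cdot\sqrt{\frac{v}{u}}\cdot e^{\,v-u}.
\]
By the asymptotics \eqref{eq13}, the first factor on the right tends to $1$ as $||\x||\to\infty$, hence is $\leq2$ for $||\x||$ large. The exponential factor is bounded deterministically via the triangle inequality above:
\[
e^{\,v-u}=\exp\!\bigl(\sqrt{2r}\,(||\x||-||\x-\y||)\bigr)\leq\exp\!\bigl(\sqrt{2r}\,||\y||\bigr).
\]
For the remaining factors one uses $d\geq2$, i.e.\ $\nu\leq0$: as soon as $||\x||\geq2||\y||$ one has $||\x-\y||/||\x||\geq1/2$, so $(||\x-\y||/||\x||)^{\nu}\leq2^{-\nu}=2^{(d-2)/2}$ and $\sqrt{v/u}=\sqrt{||\x||/||\x-\y||}\leq\sqrt2$.

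Putting the pieces together, for all $||\x||$ large enough --- the threshold depending only on $r$, $d$ and the fixed $\y$ --- one obtains
\[
\frac{\Gr(\x,\y)}{\Gr(\x,\0)}\leq 2^{(d-2)/2}\cdot2\cdot\sqrt2\cdot\exp\!\bigl(\sqrt{2r}\,||\y||\bigr)=2^{(d+1)/2}\exp\!\bigl(\sqrt{2r}\,||\y||\bigr)\leq2^{(d+3)/2}\exp\!\bigl(\sqrt{2r}\,||\y||\bigr),
\]
which is the assertion. I do not expect a real obstacle here: the only delicate points are that the threshold ``$||\x||$ large enough'' can be taken uniform (the contribution coming from \eqref{eq13} being the only non-explicit one), and the sign bookkeeping for the exponent $\nu$, which is exactly the role of the hypothesis $d\geq2$; one also needs $r>0$ so that $u,v\to\infty$ and \eqref{eq13} applies. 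Note that the generous constant $2^{(d+3)/2}$ leaves plenty of room, so no sharpness is needed in any of these estimates.
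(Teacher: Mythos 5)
Your proof is correct and follows essentially the same route as the paper: the explicit form \eqref{eq12} of the Green kernel, the Macdonald-function asymptotics \eqref{eq13}, and the triangle inequality, with only a cosmetic difference in the sign convention for $\nu$ and a slightly more explicit bookkeeping of the constants (yielding the marginally better constant $2^{(d+1)/2}$). No gaps.
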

\begin{proof} Let $\nu:=(d-2)/2$ and use the fact that
 $K_{-\nu}=K_\nu$ to obtain
\begin{equation}
\label{eq14}
\frac{G_r(\x,\y)}{G_r(\x,\0)} =
\left(\frac{||\x||}{||\x-\y||}\right)^\nu
\frac{K_\nu(||\x-\y||\sqrt{2r})}{K_\nu(||\x||\sqrt{2r})}.
\end{equation}
Applying (\ref{eq13}) and the triangle inequality we deduce for $||\x||$ large enough that
\begin{align*}
\frac{G_r(\x,\y)}{G_r(\x,\0)}
&\leq 4\left(\frac{||\x||}{||\x-\y||}\right)^{(2\nu+1)/2}
\exp\left(\sqrt{2r}\,(||\x||-||\x-\y||)\right)\\
&\leq 2^{(2\nu+5)/2} \exp\left(\sqrt{2r}\,||\y||\right)
\end{align*}
as claimed.
\end{proof}
\noindent
In the next proposition the limiting behavior of the ratio
of the Green kernels as $||\x||\to\infty$ along the rays starting
from the origin  is characterized.
The result identifies the Martin boundary of an exponentially killed
$d$-dimensional Wiener process
as the unit $d$-dimensional sphere.
\begin{proposition}
\label{lem12}
Let $(\x^{(n)}=(x^{(n)}_1,\dots,x^{(n)}_d))_{n=1}^\infty$ be a sequence of points
in $\R^d$ such
that $||\x^{(n)}||\to\infty.$ Then 
$$
\lim_{n\to\infty}\frac{G_r(\x^{(n)},\y)}{G_r(\x^{(n)},\0)}
$$
exists for all $\y=(y_1,\dots,y_d)\in\R^d$ if and only if 
$$\displaystyle{\lim_{n\to\infty}} \frac{x^{(n)}_i}{||\x^{(n)}||} =\frac{a_i}{||{\kk}||},\quad
i=1,\dots, d
$$ 
for some ${\kk}=(a_1,\dots, a_d)\in\R^d\setminus\{\0\}$.
When the limit exists it is gven by 
\begin{equation}
\label{eq141}
\frac{G_r(\x^{(n)},\y)}{G_r(\x^{(n)},\0)}\to
\exp\left(\sqrt{2r}\,\frac{{\kk}\cdot \y}{||{\kk}||}\right).
\end{equation}


\end{proposition}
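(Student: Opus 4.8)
The plan is to work directly from the quotient identity \eqref{eq14} established in the proof of Lemma~\ref{lem11},
\[
\frac{G_r(x^{(n)},y)}{G_r(x^{(n)},0)}
=\left(\frac{\|x^{(n)}\|}{\|x^{(n)}-y\|}\right)^{\nu}
\frac{K_\nu\!\left(\|x^{(n)}-y\|\sqrt{2r}\right)}{K_\nu\!\left(\|x^{(n)}\|\sqrt{2r}\right)},
\qquad \nu=\frac{d-2}{2},
\]
and to extract its precise asymptotics as $\|x^{(n)}\|\to\infty$. The elementary geometric input is the expansion
\[
\|x^{(n)}-y\|=\|x^{(n)}\|-\frac{x^{(n)}\cdot y}{\|x^{(n)}\|}+O\!\left(\frac{1}{\|x^{(n)}\|}\right),
\]
obtained by writing $\|x^{(n)}-y\|^{2}=\|x^{(n)}\|^{2}-2\,x^{(n)}\cdot y+\|y\|^{2}$ and expanding the square root; in particular $\|x^{(n)}-y\|/\|x^{(n)}\|\to 1$, so the power prefactor tends to $1$.

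Next I would substitute the Macdonald-function asymptotics \eqref{eq13}, written as $K_\nu(u)=\sqrt{\pi/2}\,u^{-1/2}e^{-u}(1+o(1))$ as $u\to\infty$. Since both arguments $\|x^{(n)}\|\sqrt{2r}$ and $\|x^{(n)}-y\|\sqrt{2r}$ diverge, the quotient of Macdonald functions equals
\[
\left(\frac{\|x^{(n)}-y\|}{\|x^{(n)}\|}\right)^{-1/2}\exp\!\left(\sqrt{2r}\,\bigl(\|x^{(n)}\|-\|x^{(n)}-y\|\bigr)\right)(1+o(1)),
\]
and multiplying by the power prefactor and inserting the geometric expansion yields
\[
\frac{G_r(x^{(n)},y)}{G_r(x^{(n)},0)}
=\exp\!\left(\sqrt{2r}\,\frac{x^{(n)}}{\|x^{(n)}\|}\cdot y\right)(1+o(1)).
\]
From here both implications are immediate. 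If $x_i^{(n)}/\|x^{(n)}\|\to a_i/\|a\|$ for every $i$, then $\tfrac{x^{(n)}}{\|x^{(n)}\|}\cdot y\to\tfrac{a\cdot y}{\|a\|}$, and the last display gives \eqref{eq141}. Conversely, if the limit exists for all $y$, specialize to $y=e_i$, the $i$-th standard basis vector: then $\exp\!\bigl(\sqrt{2r}\,x_i^{(n)}/\|x^{(n)}\|\bigr)$ converges, so $x_i^{(n)}/\|x^{(n)}\|\to c_i$ for some $c_i\in\R$; since $x^{(n)}/\|x^{(n)}\|$ is a unit vector for every $n$, the limit $c=(c_1,\dots,c_d)$ satisfies $\|c\|=1$, hence $c\neq 0$, and taking $a=c$ gives $a_i/\|a\|=c_i=\lim_n x_i^{(n)}/\|x^{(n)}\|$.

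The computation is routine; the only points needing care are that the $o(1)$ remainders coming from \eqref{eq13} are legitimate precisely because both arguments of $K_\nu$ tend to infinity, and that in $\R^d$ convergence of the unit vectors $x^{(n)}/\|x^{(n)}\|$ is equivalent to convergence of each coordinate $x_i^{(n)}/\|x^{(n)}\|$ and forces a unit — hence nonzero — limit. I do not anticipate a genuine obstacle: the substantive content is simply the identification of the Martin kernel of the exponentially killed $d$-dimensional Wiener process with the exponential $e^{\sqrt{2r}\,a\cdot y/\|a\|}$, which drops out of the Bessel asymptotics once the geometry of $\|x-y\|-\|x\|$ is made explicit.
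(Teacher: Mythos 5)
Your proof is correct and follows essentially the same route as the paper: both start from the quotient identity \eqref{eq14}, use the Macdonald asymptotics \eqref{eq13} to reduce the ratio to $\exp\bigl(\sqrt{2r}\,(\|x^{(n)}\|-\|x^{(n)}-y\|)\bigr)$ times a prefactor tending to $1$, and then analyze $\|x^{(n)}\|-\|x^{(n)}-y\|$ (the paper via the rationalization $\frac{2\,x^{(n)}\cdot y-\|y\|^2}{\|x^{(n)}\|+\|x^{(n)}-y\|}$, you via the equivalent square-root expansion). Your explicit treatment of the converse direction by testing $y=e_i$ is a detail the paper leaves to the reader, but it is the same argument.
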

\begin{proof}
Using (\ref{eq13}) in (\ref{eq14}) yields
\begin{align*}
\hskip.5cm\lim_{n\to\infty}&\frac{G_r(\x^{(n)},\y)}{G_r(\x^{(n)},\0)}\\
&\hskip.5cm=\lim_{n\to\infty}\left(\frac{||\x^{(n)}||}{||\x^{(n)}-\y||}\right)^{(2\nu+1)/2}
\exp\left(\sqrt{2r}\,(||\x^{(n)}||-||\x^{(n)}-\y||)\right)\\
&\hskip.5cm=\lim_{n\to\infty}\exp\left(\sqrt{2r}\,(||\x^{(n)}||-||\x^{(n)}-\y||)\right),
\end{align*}
since, as is easily seen,
$$
\lim_{n\to\infty} \frac{||\x^{(n)}||}{||\x^{(n)}-\y||}=1.
$$
To conclude the proof, notice that
\begin{align*}
||\x^{(n)}||-||\x^{(n)}-\y||&= \frac{2\, \x^{(n)}\cdot \y
  -||\y||^2}{||\x^{(n)}||+||\x^{(n)}-\y||}
\end{align*}
from which the claimed equivalence and formula (\ref{eq141}) are readily
obtained.

\end{proof}

\subsection{Finiteness of the value function}\label{subsec:finite}
As pointed out above, the finiteness of the value function is not easy to check for an example on hand using condition \eqref{eq:growth}. We circumvent the direct use of \eqref{eq:growth} for the optimal stopping problem of a Wiener process by giving an estimate in terms of harmonic functions. By the general Martin boundary theory, it is known that the functions
\[\x\mapsto \exp(\kk\cdot x),\]
where $\|a\|^2=2r$, found in the previous subsection are (minimal) $r$-harmonic functions. However, it is also straightforward to check directly that functions of the form
\begin{align}\label{eq:hmu}
f_\mu:x\mapsto \int_{B_{2r}} \exp(\kk\cdot x)\mu(da)
\end{align}
for a finite measure $\mu$ on $B_{2r}:=\{a:\|a\|^2=2r\}$ are $r$-harmonic, so that in particular, the process $(e^{-rt}f_\mu(W_t))_{t\geq 0}$ is a positive martingale.

\begin{proposition}\label{prop:bound}
Assume that there exists a finite measure $\mu$ on $B_{2r}$ such that the function $g/f_\mu$ is bounded. Then the value function in \eqref{eq:osp} is finite.
\end{proposition}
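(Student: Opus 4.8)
The plan is to show directly that the majorant $f_\mu$ dominates the value function up to a multiplicative constant. Write $C:=\sup_{x\in\states} g(x)/f_\mu(x)<\infty$, so that $g(x)\le C\,f_\mu(x)$ for all $x$; the claim will then follow from $\Vr\le C f_\mu$ together with the finiteness of $f_\mu$. That $f_\mu$ is finite (and strictly positive when $\mu\neq 0$) is immediate: since $B_{2r}$ is a compact sphere, $a\cdot x\le\sqrt{2r}\,\|x\|$ for $a\in B_{2r}$ by Cauchy--Schwarz, whence $f_\mu(x)\le e^{\sqrt{2r}\,\|x\|}\mu(B_{2r})<\infty$.

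First I would use the fact recorded just before the statement that $M_t:=e^{-rt}f_\mu(W_t)$ is a nonnegative martingale under each $\P_x$. By the martingale convergence theorem, $M_\infty:=\lim_{t\to\infty}M_t$ exists $\P_x$-a.s.; for a stopping time $\tau$ I set $M_\tau:=M_\infty$ on $\{\tau=\infty\}$. Applying optional stopping at the bounded times $\tau\wedge t$ gives $\E_x(M_{\tau\wedge t})=f_\mu(x)$ for every $t\ge0$, and since $M_{\tau\wedge t}\to M_\tau$ $\P_x$-a.s. as $t\to\infty$ — the sequence is eventually equal to $M_\tau$ on $\{\tau<\infty\}$ and equals $M_t\to M_\infty$ on $\{\tau=\infty\}$ — Fatou's lemma yields $\E_x(M_\tau)\le f_\mu(x)$.

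Next I would combine this with $g\le Cf_\mu$. On $\{\tau<\infty\}$ one has $e^{-r\tau}g(W_\tau)\le C M_\tau$, while on $\{\tau=\infty\}$ the convention in \eqref{eq:osp} gives $e^{-r\tau}g(W_\tau)=\limsup_{t\to\infty}e^{-rt}g(W_t)\le C\limsup_{t\to\infty}M_t=C M_\infty=C M_\tau$. Hence $\E_x(e^{-r\tau}g(W_\tau))\le C\,\E_x(M_\tau)\le C f_\mu(x)$ for every stopping time $\tau$, and taking the supremum over $\tau$ gives $\Vr(x)\le C f_\mu(x)<\infty$, as desired.

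The argument is routine once the martingale property of $(e^{-rt}f_\mu(W_t))_{t\ge0}$ is available, which the paper takes for granted; the only delicate point is the bookkeeping for $\tau=\infty$ and the $\limsup$ convention in \eqref{eq:osp}, which is handled by the optional-stopping-plus-Fatou step above. I would also remark that the proof in fact yields the quantitative estimate $\Vr\le C f_\mu$, so the value function inherits the growth of $g$ relative to $f_\mu$.
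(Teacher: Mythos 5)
Your proof is correct and follows essentially the same route as the paper: bound $g\le Cf_\mu$, use that $e^{-rt}f_\mu(W_t)$ is a nonnegative martingale to get $\E_x(e^{-r\tau}f_\mu(W_\tau))\le f_\mu(x)$, and take the supremum over $\tau$. The paper states the optional-stopping inequality without elaboration; your Fatou argument and the treatment of $\{\tau=\infty\}$ under the $\limsup$ convention simply make explicit the details the paper leaves implicit.
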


\begin{proof}
Let $c>0$ be such that $g(x)/f_\mu(x)<c$ for all $x\in\R^d$ and consider for each stopping time $\tau$
\begin{align*}
  \E_x(e^{-r\tau}g(W_\tau))&=\E_x\left(e^{-r\tau}f_\mu(W_\tau)\frac{g(W_\tau)}{f_\mu(W_\tau)}\right)\\
  &\leq c \E_x\left(e^{-r\tau}f_\mu(W_\tau)\right)\\&\leq c\,f_\mu(x)<\infty.
\end{align*}
Taking supremum over $\tau$ proves the claim.
\end{proof}

\begin{corollary}\label{coro:bound}
In the case $d=2$, if $g$ is such that
\[\sqrt{\|x\|}~e^{-\sqrt{2r}\|x\|}~g(x)\rightarrow 0~\mbox{ as }\|x\|\rightarrow\infty,\]
then the value function in \eqref{eq:osp} is finite.
\end{corollary}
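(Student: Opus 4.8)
The plan is to apply Proposition~\ref{prop:bound}, so the whole task reduces to exhibiting a finite measure $\mu$ on $B_{2r}$ for which $g/f_\mu$ is bounded. The natural candidate, since it has to dominate the growth of $g$ in every direction at once, is the arc-length measure $\sigma$ on the circle $B_{2r}=\{a\in\R^2:\|a\|^2=2r\}$ (a point mass, or finitely many point masses, would only control growth along finitely many directions and hence would not work).

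First I would compute $f_\sigma$ explicitly. By rotational invariance $f_\sigma(x)$ depends only on $\rho:=\|x\|$: parametrising $a\in B_{2r}$ by the angle $\theta$ it makes with $x$, one has $a\cdot x=\sqrt{2r}\,\rho\cos\theta$ and $\sigma(da)=\sqrt{2r}\,d\theta$, so
\[
f_\sigma(x)=\sqrt{2r}\int_0^{2\pi}e^{\sqrt{2r}\,\rho\cos\theta}\,d\theta=2\pi\sqrt{2r}\;I_0\!\left(\sqrt{2r}\,\|x\|\right),
\]
where $I_0$ is the modified Bessel function of the first kind. In particular $f_\sigma$ is continuous and strictly positive on $\R^2$, and $(e^{-rt}f_\sigma(W_t))_{t\ge0}$ is a positive martingale.

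Next I would invoke the classical asymptotics $I_0(u)\sim e^{u}/\sqrt{2\pi u}$ as $u\to\infty$, which yields a constant $C>0$ with
\[
f_\sigma(x)\sim C\,\frac{e^{\sqrt{2r}\,\|x\|}}{\sqrt{\|x\|}}\qquad\text{as }\|x\|\to\infty .
\]
Combining this with the hypothesis $\sqrt{\|x\|}\,e^{-\sqrt{2r}\,\|x\|}g(x)\to0$ gives $g(x)/f_\sigma(x)\to0$ as $\|x\|\to\infty$. Since $g$ and $f_\sigma$ are continuous and $f_\sigma>0$, the quotient $g/f_\sigma$ is continuous, hence bounded on every closed ball; together with the vanishing at infinity this makes $g/f_\sigma$ bounded on all of $\R^2$. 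Proposition~\ref{prop:bound} (applied with $\mu=\sigma$) then gives finiteness of the value function.

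The only mildly delicate point is the large-$\|x\|$ asymptotics of $f_\sigma$, i.e.\ producing the exact $\|x\|^{-1/2}$ prefactor; everything else is bookkeeping. If one wishes to avoid quoting Bessel asymptotics, the estimate follows from a one-line Laplace argument: on a fixed neighbourhood of $\theta=0$ the bound $1-\theta^2/2\le\cos\theta$ produces the leading term of order $e^{\sqrt{2r}\|x\|}/\sqrt{\|x\|}$, while on the complement $\cos\theta$ is bounded away from $1$ and the contribution is exponentially smaller; a matching upper bound comes from $\cos\theta\le 1-c\theta^2$ on $[-\pi,\pi]$.
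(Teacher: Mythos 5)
Your proof is correct and follows essentially the same route as the paper: both take the (normalized) uniform measure on the circle $B_{2r}$, identify $f_\mu$ with the modified Bessel function $I_0(\sqrt{2r}\,\|x\|)$ up to a constant, invoke the asymptotics $I_0(z)\sim e^z/\sqrt{2\pi z}$, and conclude via Proposition~\ref{prop:bound}. The extra Laplace-method remark is a nice self-contained substitute for quoting the Bessel asymptotics, but otherwise the arguments coincide.
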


\begin{proof}
Taking the uniform distribution $\mu$ on the sphere in \eqref{eq:hmu} yields
\begin{align*}
  f_\mu(x) &=\int\exp\bigg(\|a\|\,\|x\|\,\cos(\gamma(a,x))\bigg)\,\mu(da)\\
  &=\frac{1}{2\pi}\int_0^{2\pi}\exp\bigg(\sqrt{2r}\,\|x\|\,\cos(\theta)\bigg)d\theta\\
  &=\frac{1}{\pi}\int_0^{\pi}\exp\bigg(\sqrt{2r}\,\|x\|\,\cos(\theta)\bigg)d\theta\\
  &=I_0(\sqrt{2r}\,\|x\|),
\end{align*}
where $\gamma(a,x)$ denotes the angle between $a$ and $x$ and $I_0$ denotes the modified Bessel function of first kind of index 0. For the final equality see \cite{abramowitzstegun70}. For $I_0$ we have the following asymptotic formula
\[I_0(z)\simeq \frac{1}{\sqrt{2\pi z}}e^z\mbox{ as $z\rightarrow\infty$},\]
which -- when combined with Proposition \ref{prop:bound} -- completes the proof.
\end{proof}

\subsection{Equivalence of the Green and Martin kernel equations}

The next theorem establishes that the Green kernel equation implies the Martin kernel equation if the continuation set $\C$ is bounded.
\begin{theorem}\label{thm:nessecary}
Assume that there exists a bounded measurable set $C$ such that $g$ is $C^2$ in a neighborhood of $C$ and
for all $\x\in S:=C^c$ the Green kernel equation
\begin{equation}\label{eq:zeroG1}
\int_{C}G_r(\x,\y)(r-\LL)g(\y)d\y=0
\end{equation}
holds true.
Then, the Martin kernel equation
\begin{equation}\label{eq:zeroG2}
\int_{C}e^{{\aaa}\cdot\y}(r-\LL)g(\y)d\y=0\text{ for all } {\aaa}\in B_{2r}
\end{equation}
holds as well.
\end{theorem}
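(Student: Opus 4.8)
The plan is to start from the Green kernel equation \eqref{eq:zeroG1}, which holds for \emph{every} $\x\in S=C^c$, and extract the Martin kernel equation by letting $\|\x\|\to\infty$ along an appropriate ray and dividing by $G_r(\x,\0)$. Concretely, fix ${\aaa}\in B_{2r}$ and choose a sequence $\x^{(n)}$ with $\x^{(n)}/\|\x^{(n)}\|\to {\aaa}/\|{\aaa}\|$ and $\|\x^{(n)}\|\to\infty$; since $C$ is bounded, $\x^{(n)}\in S$ for all large $n$, so \eqref{eq:zeroG1} applies. Dividing by $G_r(\x^{(n)},\0)>0$ gives
\[
\int_C \frac{G_r(\x^{(n)},\y)}{G_r(\x^{(n)},\0)}\,(r-\LL)g(\y)\,d\y=0
\]
for each $n$. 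By Proposition \ref{lem12} the integrand converges pointwise on $C$ to $\exp\!\big(\sqrt{2r}\,\tfrac{{\aaa}\cdot\y}{\|{\aaa}\|}\big)=e^{{\aaa}\cdot\y}$ (using $\|{\aaa}\|^2=2r$), which is exactly the Martin kernel appearing in \eqref{eq:zeroG2}. So the whole content is to justify passing the limit inside the integral.

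The key tool for this is the uniform bound of Lemma \ref{lem11}: for $\y$ in the compact set $\overline{C}$ and $\|\x\|$ large enough,
\[
\frac{G_r(\x,\y)}{G_r(\x,\0)}\le 2^{(d+3)/2}\exp\!\big(\sqrt{2r}\,\|\y\|\big)\le 2^{(d+3)/2}\exp\!\big(\sqrt{2r}\,\sup_{\y\in \overline C}\|\y\|\big)=:M<\infty,
\]
a constant independent of $n$ (here $d\ge 2$; the case $d=1$ can either be excluded or handled by the explicit one-dimensional Green function, which has the same exponential form). Since $g$ is $C^2$ in a neighborhood of the bounded set $C$, the function $(r-\LL)g$ is continuous, hence bounded, on $\overline{C}$, so the integrands are dominated by the constant $M\cdot\sup_{\overline C}|(r-\LL)g|\cdot\indicator{C}$, which is integrable over the bounded set $C$. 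Dominated convergence then yields
\[
0=\lim_{n\to\infty}\int_C \frac{G_r(\x^{(n)},\y)}{G_r(\x^{(n)},\0)}(r-\LL)g(\y)\,d\y=\int_C e^{{\aaa}\cdot\y}(r-\LL)g(\y)\,d\y,
\]
and since ${\aaa}\in B_{2r}$ was arbitrary, \eqref{eq:zeroG2} follows.

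The main obstacle is essentially bookkeeping rather than a deep difficulty: one must make sure that the pointwise convergence in Proposition \ref{lem12} and the uniform bound in Lemma \ref{lem11} are simultaneously valid \emph{uniformly over the bounded integration domain} $C$ — i.e. that "$\|\x\|$ large enough" in Lemma \ref{lem11} can be chosen independently of $\y\in\overline C$, which it can because $\overline C$ is compact and $\|\y\|$ is bounded there. A secondary point worth a remark is the low-dimensional cases: for $d=2$ the Green kernel is still given by \eqref{eq12} with $K_0$, and Lemma \ref{lem11} applies; for $d=1$ one should note $(r-\LL)g$ still needs to be integrable, which holds since $C$ is bounded and $g\in C^2$. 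No smoothness or boundedness assumptions on $g$ away from $C$ are needed, since the integral only sees $C$.
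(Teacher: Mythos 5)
Your proposal is correct and follows essentially the same route as the paper's own (very brief) proof: divide \eqref{eq:zeroG1} by $G_r(\x,\0)$, take limits along rays using Proposition \ref{lem12} for the pointwise limit and Lemma \ref{lem11} for the dominating bound, and apply dominated convergence. Your additional remarks on uniformity of the bound over the compact closure of $C$ and on the low-dimensional cases are sensible elaborations of details the paper leaves implicit.
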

\begin{proof} Divide by $G_r(\x,0)$ in \eqref{eq:zeroG1} and take limits in $\x$ along rays.
Combining Lemmas \ref{lem11} and \ref{lem12} with the dominated convergence theorem, the result follows.
\end{proof}



Next, we show that -- {under natural assumptions} -- condition \eqref{eq:zeroG2} implies \eqref{eq:zeroG1} also.
We start with an easy lemma that sheds light on the probabilistic connection of $G_r$ and the Martin boundary.

\begin{lemma}
Let $H$ be an affine hyperplane of the form
$H=\{\y\in\R^d:{\aaa}\cdot\y=b\}$ for some ${\aaa}\in B_{2r},$
$\;b\in\R$, and introduce $\hit{H}=\inf\{t\geq0:W_t\in H\}$.
Then, for each $\x\in\R^d$,
\[4r\int_HG_r(\x,\y)\rho(d\y)=\E_\x(e^{-r\hit{H}})=e^{-\|{\aaa}\cdot \x-b\|},\]
where $\rho$ denotes the surface measure on $H$.
\end{lemma}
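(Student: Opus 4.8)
The plan is to reduce the $d$-dimensional statement to a one-dimensional computation by projecting onto the direction $a/\|a\|$. Since $\|a\|^2 = 2r$, write $n := a/\|a\| = a/\sqrt{2r}$, a unit vector, and let $\pi(y) := n\cdot y$ be the (scalar) orthogonal projection onto the line spanned by $n$. Under $\P_x$, the process $t\mapsto n\cdot W_t$ is a one-dimensional standard Brownian motion started at $n\cdot x$, and the hyperplane $H=\{y:a\cdot y=b\}=\{y: n\cdot y = b/\sqrt{2r}\}$ is hit by $W$ exactly when this one-dimensional Brownian motion hits the level $c := b/\sqrt{2r}$. Hence $\hit{H}$ equals the first hitting time of $c$ by a one-dimensional Brownian motion started at $n\cdot x$, and the classical formula for the Laplace transform of that hitting time gives
\[
\E_x\bigl(e^{-r\hit{H}}\bigr) = \exp\bigl(-\sqrt{2r}\,|n\cdot x - c|\bigr) = \exp\bigl(-|a\cdot x - b|\bigr),
\]
using $\sqrt{2r}\,|n\cdot x-c| = \sqrt{2r}\,\bigl| \tfrac{a\cdot x}{\sqrt{2r}} - \tfrac{b}{\sqrt{2r}}\bigr| = |a\cdot x - b|$. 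This settles the second equality.

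For the first equality I would compute the left-hand side directly from the definition \eqref{eq:green_measure} of $G_r$ together with the transition density \eqref{eq11}. We have
\[
\int_H G_r(x,y)\,\rho(dy) = \int_0^\infty e^{-rt}\int_H p(t;x,y)\,\rho(dy)\,dt,
\]
and the inner integral is the integral of the $d$-dimensional Gaussian density over the hyperplane $H$. By the factorization of the Gaussian into the component along $n$ and the $(d-1)$ components orthogonal to $n$, integrating out the orthogonal directions leaves exactly the one-dimensional Gaussian density of $n\cdot W_t$ evaluated at the level $c$; that is, $\int_H p(t;x,y)\rho(dy) = (2\pi t)^{-1/2}\exp\bigl(-(n\cdot x - c)^2/(2t)\bigr)$. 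Therefore
\[
\int_H G_r(x,y)\,\rho(dy) = \int_0^\infty e^{-rt}(2\pi t)^{-1/2}\exp\!\left(-\frac{(n\cdot x-c)^2}{2t}\right)dt,
\]
which is precisely the one-dimensional Green kernel $G_r^{(1)}(n\cdot x, c)$ of Brownian motion. Using \eqref{eq12} in dimension $d=1$ (where $K_{1/2}(u) = \sqrt{\pi/(2u)}\,e^{-u}$) this integral evaluates to $\tfrac{1}{\sqrt{2r}}\exp(-\sqrt{2r}\,|n\cdot x - c|) = \tfrac{1}{\sqrt{2r}}\exp(-|a\cdot x - b|)$. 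Multiplying by $4r$ gives $4r \cdot \tfrac{1}{\sqrt{2r}}\,e^{-|a\cdot x-b|} = 2\sqrt{2r}\,e^{-|a\cdot x-b|}$, so in fact the constant that makes both sides agree is $2\sqrt{2r}$ rather than $4r$; I would double-check the normalization of $\rho$ and of $G_r$ here, but modulo this constant the identity is exactly the one-dimensional Green-function formula transported through the projection.

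The only real obstacle is bookkeeping: making sure the surface measure $\rho$ on $H$ is normalized so that Fubini's theorem against the Gaussian gives the clean one-dimensional marginal with no stray Jacobian factor, and tracking the constant $\sqrt{2r}$ through the half-integer Bessel function identity. There is no analytic difficulty — the statement is, after the projection, the textbook computation that the Laplace transform of a Brownian hitting time of a point equals $e^{-\sqrt{2\lambda}|x|}$ and equals $\sqrt{2\lambda}$ times the one-dimensional $\lambda$-Green function — so once the projection argument and the normalization constants are pinned down the proof is complete. (One can alternatively obtain the first equality without any explicit integral by applying Lemma \ref{harmonic} with $\tau = \hit{H}$ and the set $H$, noting that $G_r(\cdot,H)=0$ as $H$ is Lebesgue-null, which gives $\int_H G_r(x,y)\rho(dy)$ directly as an occupation-type quantity; but the cleanest route remains the one-dimensional reduction above.)
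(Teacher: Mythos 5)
Your main argument is correct and is essentially the paper's own route: both proofs reduce the claim to a one-dimensional computation along the normal direction $a/\|a\|$ --- you by marginalizing the Gaussian density over $H$ inside the Fubini integral, the paper by writing $rG_r(\x,\cdot)$ as the law of $W_T$ for an independent $\mathrm{Exp}(r)$ time $T$ and then projecting; the treatment of the second equality (Laplace transform of the hitting time of a level by the projected one-dimensional Brownian motion) is identical in substance. Three remarks. First, your computation $\int_H G_r(\x,\y)\rho(d\y)=\tfrac{1}{\sqrt{2r}}e^{-|a\cdot \x-b|}$ is correct, and you are right that the prefactor $4r$ in the statement cannot be right; but the constant that actually makes the identity hold is $\sqrt{2r}$, not $2\sqrt{2r}$ --- the number $2\sqrt{2r}$ you report is the factor by which $4r\int_H G_r\rho$ exceeds the right-hand side, not the corrected prefactor. (The paper's own derivation of $\tfrac1{4r}$ identifies the surface integral of the density of $W_T$ over $H$ with the density of $a\cdot W_T$ at $b$, which silently drops the Jacobian $\|a\|=\sqrt{2r}$ of the rescaling $u\mapsto\|a\|u$; the slip is harmless downstream, since in the proof of Theorem \ref{thm:suff} the quantity is equated to zero and every multiplicative constant cancels.) Second, your parenthetical alternative via Lemma \ref{harmonic} does not work: $H$ is Lebesgue-null, so $G_r(\cdot,H)\equiv 0$ and that identity degenerates to $0=0$; it gives no access to $\int_H G_r(\x,\y)\rho(d\y)$, which is an integral against the surface measure and not the Green measure of $H$. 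Third, to justify the interchange $\int_H\int_0^\infty=\int_0^\infty\int_H$ you should note the integrand is nonnegative, so Tonelli applies; with that, your main computation is complete.
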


\begin{proof}
Note that $\hit{H}=\inf\{t\geq0:{\aaa}\cdot W_t=b\}$ and the process $\widetilde{W}:={\aaa}\cdot W$ is a one-dimensional Wiener process with standard deviation $2r$. Therefore, the second identity holds by {\cite[II,1,2.0.1]{borodin-salminen}}. For the first one, note that{ for an independent exponential time $T$ with parameter $r$, we have
\begin{align*}
\int_HG_r(\x,\y)\rho(d\y)&=\frac{1}{r}\int_H\frac{\Pro_{\x}({W}_T\in d\y)}{d\y}\rho(d\y)=\frac{1}{r}\frac{\Pro_{\x}(\aaa{W}_T\in db)}{db}\\
&=\frac{1}{r}\frac{\Pro_{{\aaa}\x}(\tilde{W}_T\in db)}{db}=\frac{1}{4r}e^{-\|\aaa\x-b\|},
\end{align*}
where we used \cite[I, Appendix 1, 1]{borodin-salminen} in the last step. }
\end{proof}

\begin{theorem}\label{thm:suff}
Let $f\colon\R^d\to\R$ be continuous
and $C$ be a {bounded measurable set such that $f$ is $C^2$ in a neighborhood of $C$} and assume that
\begin{equation}\label{eq:zero}
\int_{C}e^{{\aaa}\cdot\y}f(\y)d\y=0,\text{ for all } {\aaa}\in B_{2r}.
\end{equation}
 Then for all $\x\in \R^d\setminus \conv(C)$
\begin{equation}\label{eq:zero2}
\int_{C}G_r(\x,\y)f(\y)d\y=0,
\end{equation}
where $\conv(C)$ denotes the convex closure of the set $C$.\\
If furthermore $S=C^c$ is closed and connected, then \eqref{eq:zero2} holds true for all $x\in S$.
\end{theorem}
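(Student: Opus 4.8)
The plan is to derive the statement from the Helgason support theorem applied to the function
\[
\Phi(\x):=\int_{C}G_r(\x,\y)f(\y)\,d\y .
\]
Since $C$ is bounded and $f$ continuous, $f\indicator{C}$ is bounded with compact support, so $\Phi$ is well defined and continuous on $\R^d$; differentiating under the integral sign away from $\overline C$ and using that $(r-\LL)G_r(\cdot,\y)=0$ off the diagonal shows that $\Phi$ is $r$-harmonic, hence real-analytic, on $\R^d\setminus\overline C$. Inserting \eqref{eq13} into \eqref{eq12} shows that $G_r(\x,\y)$ decays like $e^{-\sqrt{2r}\,\|\x\|}$ as $\|\x\|\to\infty$, uniformly for $\y$ in the bounded set $C$, so $\Phi$ decays exponentially; in particular it is rapidly decreasing and has a well-defined Radon transform.

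The crux is to rewrite the hypothesis \eqref{eq:zero} as a vanishing statement for the Radon transform $\mathcal R\Phi(\xi,p):=\int_{\{\z:\,\xi\cdot\z=p\}}\Phi(\z)\,\rho(d\z)$. Fix $\aaa\in B_{2r}$ and put $M:=\sup_{\y\in\overline C}\aaa\cdot\y$. For every $b_0\ge M$ and every $\y$ with $\aaa\cdot\y\le b_0$ we have the elementary identity $e^{\aaa\cdot\y}=e^{b_0}\int_{b_0}^{\infty}e^{-|\aaa\cdot\y-b|}\,db$. Combining this with the Lemma preceding the theorem, which gives $\int_{H(\aaa,b)}G_r(\y,\z)\,\rho(d\z)=\tfrac1{4r}e^{-|\aaa\cdot\y-b|}$ for the hyperplane $H(\aaa,b):=\{\z:\aaa\cdot\z=b\}$ (here it is essential that $\aaa\in B_{2r}$), substituting into \eqref{eq:zero}, interchanging the order of integration (legitimate since $\int_Ce^{\aaa\cdot\y}|f(\y)|\,d\y<\infty$ because $C$ is bounded), and using $G_r(\y,\z)=G_r(\z,\y)$, we obtain
\[
0=\int_{b_0}^{\infty}\Big(\int_{H(\aaa,b)}\Phi(\z)\,\rho(d\z)\Big)\,db\qquad\text{for every }b_0\ge M .
\]
The inner integral is a continuous function of $b$ lying in $L^1([M,\infty))$ (again by the exponential decay of $\Phi$), so it vanishes for all $b>M$. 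Carrying out the same computation with $-\aaa\in B_{2r}$ handles the hyperplanes on the opposite side of $\overline C$, and, writing $\aaa=\sqrt{2r}\,\xi$, we conclude that $\mathcal R\Phi(\xi,p)=0$ whenever the hyperplane $\{\z:\xi\cdot\z=p\}$ is disjoint from $\overline C$.

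A fortiori, $\mathcal R\Phi$ vanishes on every hyperplane disjoint from the compact convex set $K:=\conv(C)$. Since $\Phi$ is continuous and rapidly decreasing, the Helgason support theorem yields $\Phi\equiv 0$ on $\R^d\setminus K$, which is precisely \eqref{eq:zero2} for $\x\notin\conv(C)$. For the final assertion, $\R^d\setminus\conv(C)$ is a nonempty open subset of $\R^d\setminus\overline C$, on which $\Phi$ is real-analytic; when $S=C^c$ is closed and connected this forces, by unique continuation, $\Phi\equiv 0$ on $\R^d\setminus\overline C$, and continuity of $\Phi$ then gives $\Phi\equiv 0$ on all of $S$.

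The main obstacle is the middle paragraph: observing that the exponentials $e^{\aaa\cdot\y}$, $\aaa\in B_{2r}$, can be reconstructed on a half-space from the Green-kernel masses $\z\mapsto G_r(\y,\z)$ of parallel hyperplanes --- which is exactly the content of the preceding Lemma --- so that \eqref{eq:zero} translates into the hypotheses of the Helgason support theorem for $\Phi$; one must also take care of the integrability needed for Fubini's theorem and for the rapid-decrease assumption of that theorem. The half-space-to-hyperplane passage (letting $b_0$ vary) and the real-analytic unique-continuation argument in the connected case are then routine, modulo the mild regularity of $S$ implicit in ``closed and connected''.
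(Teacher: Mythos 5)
Your proposal is correct and follows essentially the same route as the paper: the hyperplane lemma plus Fubini converts \eqref{eq:zero} into the vanishing of the Radon transform of $\x\mapsto\int_C G_r(\x,\y)f(\y)d\y$ on all hyperplanes missing $C$, Helgason's support theorem then gives \eqref{eq:zero2} off $\conv(C)$, and real-analyticity with unique continuation handles the connected case. The only difference is cosmetic: where the paper substitutes $e^{\aaa\cdot\y}=e^{b}e^{-|\aaa\cdot\y-b|}$ directly for a fixed hyperplane, you integrate over the half-space $b\in[b_0,\infty)$ and then differentiate in $b_0$ to recover the individual hyperplane integrals, which is a harmless detour.
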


\begin{proof}
We write
\[f_C(x)=\int_{C}G_r(\x,\y)f(\y)d\y\]
for short. Let $H\subseteq \R^d\setminus  C$ be an arbitrary hyperplane not intersecting $C$, where the parameters ${\aaa}$ and $b$ are chosen such that ${\aaa}\cdot\y\leq b$ for all $\y\in C$. Then, using the previous lemma,
\begin{align*}
0=&{\frac{1}{4r}}\int_{C}e^{{\aaa}\cdot\y}f(\y)d\y={\frac{1}{4r}}\int_{C}e^{b}e^{{\aaa}\cdot\y-b}f(\y)d\y=\int_{C}e^b\int_HG_r(\y,\z)\rho(d\z)f(\y)d\y\\
=&e^b\int_H\int_{C}G_r(\y,\z)f(\y)d\y\rho(d\z)
=e^b\int_H\int_{C}G_r(\z,\y)f(\y)d\y\rho(d\z)\\
=&e^b\int_Hf_C(\z)\rho(d\z).
\end{align*}
We obtain that the integral of $f_C$ over all hyperplanes not intersecting $C$ vanishes. Furthermore, $f_C$ is continuous and {it is easily checked from \eqref{eq13} and the boundedness of $C$ that }$\|\x\|^kf_C(x)$ is bounded for each integer $k>0$. Now, Helgason's support theorem (see, e.g., \cite[Corollary 2.8]{helgason}) yields that $f_C(x)=0$ for all $x\in\R^d\setminus \conv(C)$, proving the first claim. 

%


For the second claim assume now that $S=C^c$ is connected. As $C$ is bounded, the set $ \R^d\setminus \conv(\C)$ contains an interior point.
By connectedness, take $\y\in\R^d\setminus C$ and take an analytical curve $\gamma:[0,1]\rightarrow S$ with $\gamma(0)=\y$, $\gamma(1)=\x$, where $\x\in \R^d\setminus \conv(C)$. As $f_C$ is analytic on $\R^d\setminus C$, so is $f:=f_C \circ \gamma.$ Furthermore, $f(\z)=0$ in a neighborhood of $\z=1$ by the first claim. By the identity theorem for analytic functions, we have $f\equiv 0$, which proves that $f_C(y)=0$.
\end{proof}

Adjusting it to the stopping situation, we have have the following uniqueness result.

\begin{corollary}\label{cor:unique}
In the optimal stopping problem \eqref{eq:osp} with $X=W$, assume that the reward function $g$ is $C^2$ and let $\SP$ be a class of connected sets $S$ such that $C:=\R^d\setminus S$ is bounded. If $S=\S$ is the unique solution to the Green kernel equation
\begin{equation}\label{eq:zeroC1}
\int_{S^c}G_r(\x,\y)(r-\LL)g(\y)d\y=0,\;\x\in S,
\end{equation}
in the class $\SP$, then $S=\S$ is the unique solution to the Martin kernel equation
\begin{equation}\label{eq:zeroC2}
\int_{S^c}e^{{\aaa}\cdot\y}(r-\LL)g(\y)d\y=0,\text{ for all }
    {\aaa}\in B_{2r}
\end{equation}
in the class $\SP$ as well.
\end{corollary}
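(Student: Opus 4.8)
The plan is to deduce Corollary~\ref{cor:unique} from the equivalence of the Green and Martin kernel equations (Theorems~\ref{thm:nessecary} and~\ref{thm:suff}), applied with $f=(r-\LL)g$, together with the uniqueness hypothesis. First I would fix the class $\SP$ of connected closed sets $S$ with $C:=\R^d\setminus S$ bounded, and note that since $g$ is $C^2$, the function $f=(r-\LL)g$ is continuous, so the hypotheses of both Theorems~\ref{thm:nessecary} and~\ref{thm:suff} are met for each $S\in\SP$ (here $g$ being globally $C^2$ makes the ``$C^2$ in a neighborhood of $C$'' requirement automatic).

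The core of the argument is to show that, within the class $\SP$, a set $S$ solves the Green kernel equation \eqref{eq:zeroC1} if and only if it solves the Martin kernel equation \eqref{eq:zeroC2}. For the forward direction, suppose $S\in\SP$ satisfies \eqref{eq:zeroC1}; in particular the equation holds at every $\x\in S\supseteq\partial S$, and since $C=S^c$ is bounded, Theorem~\ref{thm:nessecary} (with $f=(r-\LL)g$) immediately gives \eqref{eq:zeroC2}. For the converse, suppose $S\in\SP$ satisfies \eqref{eq:zeroC2}. Then $\int_C e^{\aaa\cdot\y}f(\y)\,d\y=0$ for all $\aaa\in B_{2r}$, so Theorem~\ref{thm:suff} applies: since $C$ is bounded and $S=C^c$ is closed and connected, we conclude $\int_C G_r(\x,\y)f(\y)\,d\y=0$ for all $\x\in S$, which is exactly \eqref{eq:zeroC1}. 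Thus the solution sets of the two equations inside $\SP$ coincide.

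Finally I would invoke the hypothesis that $S=\S$ is the \emph{unique} solution of the Green kernel equation \eqref{eq:zeroC1} in $\SP$. By the equivalence just established, the set of solutions of \eqref{eq:zeroC2} in $\SP$ is the same singleton $\{\S\}$; in particular $\S$ itself solves \eqref{eq:zeroC2} (which also follows directly from Theorem~\ref{thm:nessecary} applied to $\S$, using that the value function has the Green representation of Theorem~\ref{theorem:general}), and no other set in $\SP$ does. This proves that $S=\S$ is the unique solution to the Martin kernel equation in $\SP$.

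The only genuinely delicate point is making sure the two theorems really do stitch together cleanly over the \emph{same} range of test points: Theorem~\ref{thm:nessecary} needs \eqref{eq:zeroC1} to hold on all of $S$ (not merely $\partial S$) to license dominated convergence along rays via Lemmas~\ref{lem11} and~\ref{lem12}, while Theorem~\ref{thm:suff} only \emph{outputs} \eqref{eq:zeroC2}$\Rightarrow$\eqref{eq:zeroC1} on $S$ under the connectedness of $S=C^c$ (used for the analytic-continuation step). Both requirements are built into the definition of $\SP$ and into the form of equation \eqref{eq:zeroC1}, so no extra work is needed — but this matching of domains is exactly where one must be careful, and it is why the corollary is phrased for connected $S$ with bounded complement rather than in greater generality.
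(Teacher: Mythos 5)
Your proposal is correct and follows essentially the same route as the paper: apply Theorem~\ref{thm:nessecary} for the implication from the Green to the Martin kernel equation and Theorem~\ref{thm:suff} (using that $C$ is bounded and $S=C^c$ is connected) for the converse, so that the solution sets in $\SP$ coincide and uniqueness transfers. Your additional remarks on matching the ranges of test points merely elaborate what the paper leaves implicit.
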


\begin{proof}
Theorem \ref{thm:nessecary} yields that each solution of \eqref{eq:zeroC1} is also a solution to \eqref{eq:zeroC2}. On the other hand, if $C$ is a solutions to \eqref{eq:zeroC2}, then Theorem \ref{thm:suff} yields that $C$ is also a solution to \eqref{eq:zeroC1}, which proves the uniqueness.
\end{proof}

{

\begin{remark}
The results we obtained so far are applicable for bounded continuation sets as we are interested in this case for the examples considered below. The generalization of the results to unbounded continuation sets is not straightforward. One first reason is that one cannot expect \eqref{eq:zeroG2} to hold true for all vectors in the circle
$\|\aaa\|=2r$, but only for those directions $\aaa$ such that there exists a sequence
$(\x^{(n)}=(x^{(n)}_1,\dots,x^{(n)}_d))_{n=1}^\infty$ of points
in the stopping set such
that $||\x^{(n)}||\to\infty$ and
$\displaystyle{\lim_{n\to\infty}} x^{(n)}_i/||\x^{(n)}|| =\aaa_i/||{\aaa}||,\,
i=1,\dots, d.$
To establish a variant for the reverse implication, the main ingredient needed is a variant of Helgason's support theorem for unbounded sets in the spirit of \cite{banh} under suitable assumptions on the geometry of the continuation set.
\end{remark}

\begin{remark}
Note that choosing a standard Brownian motion without drift in the previous considerations is just a matter of standardization. Indeed, optimal stopping problems for $d$-dimensional possibly correlated Brownian motions with possible drift can be transformed to our setting using a Girsanov transform.
\end{remark}


}

{
\section{Numerical approximation of the stopping set through an example}\label{sec:numerics}

Our main result reduces the solution to an optimal stopping problem \eqref{eq:osp}
to the problem of finding a surface -- the optimal stopping boundary --
that satisfies a set of integral equations,
parametrized in the harmonic functions.
As usual, when implementing
this theoretical system of equations (infinite unknowns for infinite equations) the idea is to discretize
the problem, and produce as many equations as unknowns.
Nevertheless, we have no warranty that this procedure, that gives non-linear equations,
can give a numerically stable solution.
Of course, a previous approximate knowledge of the shape and location of the solution can help with the initial set for an approximation algorithm.
In this direction, a key r\^ole is played by the \emph{negative set} of the problem, that we denote by
$$
\Neg=\{x\in\R^d\colon \rl g(x)\leq 0\}.
$$
As we know that the continuation set contains the negative set, i.e. $\Neg\subseteq\C$,
a natural proposal is to depart from this negative set,
considering the continuation set $\C$ as an enlargement of this set $\Neg$.

To illustrate the previous general considerations, we concentrate on a reward functions
of the form $g(x)=\sum_{i=1}^d\lambda_i x_i^2$, with $\lambda_i>0$.
Due to the symmetry in law of the Wiener process, any problem with a positive definite quadratic form payoff function can be reduced to this form. As scaling does not change the optimal stopping rule, we can furthermore assume w.l.o.g. that $\lambda_1=1$. In the two-dimensional case, we use the parametrization $g(x,y)=x^2+\alpha^2 y^2$ for general $\alpha>0$.

\subsection{The symmetric case}\label{subsec:symmetric}

 Let us discuss first the case $d=2$ with $\alpha=1$. This is a particularly easy case, as the problem can be identified to be one-dimensional due to symmetry. Indeed, $Z=\{Z_t=\sqrt{g(X_t)}\colon t\geq 0\}$ is a Bessel process of order 2, so that the problem is equivalent to the optimal stopping problem for the one-dimensional diffusion $Z$ and reward $z^2$. It turns out that the optimal stopping time for the original problem is given by a first exit time from a circle around $0$. The calculations for finding the radius $R=R_r$ can be carried out similarly to those in \cite[Subsection 2.4.6]{crocce}.
Symmetric quadratic rewards in $\R^d$ with $d\geq 3$ allow for the same treatment.

\subsection{The two-dimensional quadratic reward}\label{subsec:quadr}
Now, we consider $g(x,y)=x^2+\alpha^2 y^2$ for general $\alpha$.
We have
$$
\Neg=\Big\{(x,y)\colon x^2+\alpha^2y^2\leq \beta^2\Big\},
$$
where
$$
\beta=\sqrt{1+\alpha^2\over r}.
$$
As $g$ is a polynomial, Corollary \ref{coro:bound} yields the finiteness of the value function.
{In the following lemma, we collect some properties of the optimal stopping set $\S=\{(x,y)\in\R^2:V(x,y)=g(x,y)\}$:
\begin{lemma}\label{lem:stop_prop}
\begin{enumerate}[\rm (i)]
\item $\S$ is closed.
\item $\Neg\subseteq \C=\R^2\setminus \S$.
\item $\S$ is symmetric w.r.t. the Cartesian axis, i.e. if $(x,y)\in \S$, then \linebreak $((-1)^\ell x,(-1)^ky)\in \S$ for all $\ell,k\in \{0,1\}$.
\item For any $(x,y)\in \S$ there exists a cone with vertex $(x,y)$, which is a subset of $\S$.
\item $\C$ is star-shaped with origin as a center, i.e., for any $(x,y)\in \C$ the line segment between the origin and $(x,y)$ is also contained in $\C$.
\item $\C$ is bounded.
More precisely, for $\alpha\geq 1$ and $(x,y)$ such that $|x|\geq \alpha^2R,\,|y|\geq R,$ where $R=R_r$
denotes the radius of the optimal stopping circle in the symmetric problem discussed in Subsection \ref{subsec:symmetric}, it holds that $(x,y)\in \S.$

\end{enumerate}
\end{lemma}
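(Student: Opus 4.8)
The plan is to organize everything around one identity that exploits that $g$ is a positive definite quadratic form. Writing $A=\mathrm{diag}(2,2\alpha^2)$, the gradient $\nabla g(x)=Ax$ is \emph{linear} and the Taylor expansion $g(x+w)=g(x)+\langle Ax,w\rangle+g(w)$ is \emph{exact}; hence, with $m_\tau:=\E_0(e^{-r\tau}W_\tau)\in\R^2$ and $k_\tau:=\E_0(e^{-r\tau}g(W_\tau))\ge0$ (finite by Corollary \ref{coro:bound}), every stopping time $\tau$ satisfies
\begin{equation}\label{eq:keyid}
\E_0\big(e^{-r\tau}g(x+W_\tau)\big)=g(x)-g(x)(1-\E_0 e^{-r\tau})+\langle Ax,m_\tau\rangle+k_\tau ,
\end{equation}
so that, recentring at $x$, $\Vr(x)-g(x)=\sup_\tau\big[-g(x)(1-\E_0 e^{-r\tau})+\langle Ax,m_\tau\rangle+k_\tau\big]$. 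The second ingredient I would establish, via It\^o's formula applied to $e^{-rt}W^{(i)}_t$ and $e^{-rt}(W^{(i)}_t)^2$ together with localization, are the elementary bounds
\begin{equation}\label{eq:bds}
|m^{(i)}_\tau|\le r^{-1/2}(1-\E_0 e^{-r\tau}),\qquad 0\le k_\tau\le\beta^2(1-\E_0 e^{-r\tau}),\quad i=1,2.
\end{equation}
Granting these, parts (i) and (iii) are immediate: $\Vr$ is continuous (Feller property plus the growth condition, which holds by Corollary \ref{coro:bound}), so $\S=(\Vr-g)^{-1}(\{0\})$ is closed; and each reflection $(x,y)\mapsto((-1)^\ell x,(-1)^k y)$ leaves the law of $W$ and the value of $g$ invariant, hence leaves $\Vr$, and therefore $\S$, invariant.

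For part (ii) I would invoke the classical principle that stopping is never optimal where one can strictly improve. If $x\in\mathrm{int}\,\Neg$ then $(r-\LL)g<0$ on a small ball $B\ni x$, and Dynkin's formula for $h_{B^c}$ gives $\Vr(x)\ge\E_x\big(e^{-rh_{B^c}}g(W_{h_{B^c}})\big)=g(x)-\E_x\!\int_0^{h_{B^c}}e^{-rs}(r-\LL)g(W_s)\,ds>g(x)$. For $x_0\in\partial\Neg$ one has $\nabla(r-\LL)g(x_0)=rAx_0\ne0$, and running the same computation with a ball of radius $\varepsilon$ centred slightly \emph{inside} $\Neg$ and expanding the integral to order $\varepsilon^{3}$ shows it is strictly negative for small $\varepsilon$, so again $x_0\in\C$. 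For part (v), fix $x\in\S$ and $\lambda\ge1$ and use $g(\lambda x)=\lambda^2g(x)$, $\nabla g(\lambda x)=\lambda Ax$ in \eqref{eq:keyid}: for each $\tau$ the map $\lambda\mapsto F(\lambda;\tau):=-\lambda^2 g(x)(1-\E_0 e^{-r\tau})+\lambda\langle Ax,m_\tau\rangle+k_\tau$ is concave, with $F(0;\tau)=k_\tau\ge0$ and $F(1;\tau)\le\sup_\tau F(1;\tau)=\Vr(x)-g(x)=0$; a concave function that is $\ge0$ at $0$ and $\le0$ at $1$ is $\le0$ on $[1,\infty)$, whence $\Vr(\lambda x)-g(\lambda x)=\sup_\tau F(\lambda;\tau)\le0$, i.e.\ $\lambda x\in\S$. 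Since $0\in\C$, this is exactly the star-shapedness of $\C$ about the origin.

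Part (iv) carries the most content. Let $x_0\in\S$; by (iii) I may assume $x_0=(a,b)$ with $a,b\ge0$. Because $x_0\in\S$, identity \eqref{eq:keyid} gives $\E_0(e^{-r\tau}g(x_0+W_\tau))\le\Vr(x_0)=g(x_0)$ for \emph{every} $\tau$. For a direction $v=(v_1,v_2)$, the exact expansion $g(x_0+v+w)=g(x_0+w)+\big(g(x_0+v)-g(x_0)\big)+\langle Av,w\rangle$ combined with this bound yields, for every $\tau$,
\begin{align*}
\E_0\big(e^{-r\tau}g(x_0+v+W_\tau)\big)
&=\E_0\big(e^{-r\tau}g(x_0+W_\tau)\big)+\big(g(x_0+v)-g(x_0)\big)\E_0 e^{-r\tau}+\langle Av,m_\tau\rangle\\
&\le g(x_0+v)-\big[\langle Ax_0,v\rangle+g(v)\big](1-\E_0 e^{-r\tau})+\langle Av,m_\tau\rangle ,
\end{align*}
using $g(x_0+v)-g(x_0)=\langle Ax_0,v\rangle+g(v)$. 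By \eqref{eq:bds}, $\langle Av,m_\tau\rangle\le(2|v_1|+2\alpha^2|v_2|)r^{-1/2}(1-\E_0 e^{-r\tau})$, so the right-hand side is $\le g(x_0+v)$ as soon as $(2|v_1|+2\alpha^2|v_2|)r^{-1/2}\le\langle Ax_0,v\rangle+g(v)$; discarding $g(v)\ge0$, it suffices that $(|v_1|+\alpha^2|v_2|)r^{-1/2}\le av_1+\alpha^2 bv_2$. Now part (ii) tells us $x_0\notin\Neg$, i.e.\ $a^2+\alpha^2 b^2>\beta^2=(1+\alpha^2)/r$, which forces $a>r^{-1/2}$ or $b>r^{-1/2}$. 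In the first case the last inequality holds on the nondegenerate cone $\{(v_1,v_2):v_1\ge0,\ 0\le v_2\le cv_1\}$ with $c=(a\sqrt r-1)/\alpha^2>0$; in the second, on the analogous cone opening along the $y$-axis. Hence $\E_0(e^{-r\tau}g(x_0+v+W_\tau))\le g(x_0+v)$ for all $\tau$ and all $v$ in this cone, i.e.\ $x_0$ plus the cone lies in $\S$, which proves (iv).

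Finally, for part (vi), inserting \eqref{eq:bds} into the representation of $\Vr-g$ bounds
\[
\Vr(x)-g(x)\le\sup_{s\in[0,1]}s\Big(-g(x)+\tfrac{2|x_1|+2\alpha^2|x_2|}{\sqrt r}+\beta^2\Big),
\]
which vanishes once $g(x)\ge(2|x_1|+2\alpha^2|x_2|)/\sqrt r+\beta^2$; the exceptional set is bounded, so $\C$ is bounded. To obtain the \emph{explicit} region I would instead compare $\Vr(x,y)$ with the value function $V_{\mathrm{sym}}$ of the symmetric problem of Subsection \ref{subsec:symmetric}: since $\E_{(x,y)}(e^{-r\tau}(W^{(1)}_\tau)^2)$ and $\E_{(x,y)}(e^{-r\tau}(W^{(2)}_\tau)^2)$ are each dominated by $V_{\mathrm{sym}}$ evaluated at a suitable radius, and $V_{\mathrm{sym}}(\rho)=\rho^2$ for $\rho\ge R$, one gets $\Vr(x,y)\le x^2+\alpha^2 y^2$ on the stated range. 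The two steps I anticipate being genuinely delicate are exactly this comparison---pinning the threshold down to the symmetric radius $R$ with the factor $\alpha^2$ rather than the crude constant above---and the $\partial\Neg$ case in (ii); everything else follows mechanically from \eqref{eq:keyid} and \eqref{eq:bds}.
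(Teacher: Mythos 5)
Your central device --- the exact second--order expansion of the quadratic reward, giving $\Vr(x)-g(x)=\sup_\tau\bigl[-g(x)(1-\E_0 e^{-r\tau})+\langle Ax,m_\tau\rangle+k_\tau\bigr]$ --- is exactly the structure the paper's proof exploits, where the same quantity is written as $q_\tau(x,y)=-a_\tau\bigl((x+d_{1,\tau})^2+\alpha^2(y+d_{2,\tau})^2-c_\tau\bigr)$ so that $\C$ becomes a union of elliptic discs containing the origin. Parts (i), (iii), (v) are fine (your concavity-in-$\lambda$ argument for (v) and the paper's convexity-of-the-ellipses argument are the same fact in two languages). For (iv) your version is in fact \emph{more} explicit than the paper's: the paper merely asserts that ``a suitable cone'' avoids all the ellipses, whereas your bound $|m^{(i)}_\tau|\le r^{-1/2}(1-\E_0 e^{-r\tau})$ confines the ellipse centres to the box $[-r^{-1/2},r^{-1/2}]^2$ and yields the cone with explicit slope $c=(a\sqrt r-1)/\alpha^2$, using $\S\cap\Neg=\emptyset$ to force $a>r^{-1/2}$ or $b>r^{-1/2}$. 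Two caveats. That bound on $m_\tau$ is true but does not follow from It\^o applied to $e^{-rt}W^{(i)}_t$ as you state (that only gives $m^{(i)}_\tau=-r\E\int_0^\tau e^{-rs}W^{(i)}_s\,ds$), and Cauchy--Schwarz only gives the insufficient $r^{-1/2}\sqrt{1-\E_0 e^{-r\tau}}$; apply It\^o instead to $e^{-rt}(W^{(i)}_t\pm r^{-1/2})^2$ and discard the nonpositive terms to get $\pm 2r^{-1/2}m^{(i)}_\tau\le 2r^{-1}(1-\E_0 e^{-r\tau})$. Also, in (ii) the boundary case is genuinely delicate: for the exit time of a ball centred \emph{at} $x_0\in\partial\Neg$ one has $(r-\LL)g(x_0+u)=r(\langle Ax_0,u\rangle+g(u))$ with vanishing mean linear part and positive quadratic part, so that ball gives the wrong sign; your off-centre construction can be made to work but must actually be written out (the paper sidesteps this by citing general theory).

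The genuine gap is part (vi). Your first estimate does prove that $\C$ is bounded, but the lemma asserts the quantitative statement that $(x,y)\in\S$ whenever $\alpha\ge1$, $|x|\ge\alpha^2R$ and $|y|\ge R$ with $R$ the radius from Subsection \ref{subsec:symmetric}, and this is precisely what you leave as an unexecuted sketch; the comparison you gesture at does not obviously produce the asymmetric thresholds $\alpha^2R$ versus $R$. The paper's argument has three steps you would need to supply: (a) for $|y|\ge R$, $\E_{(0,y)}\bigl(e^{-r\tau}g(X_\tau)\bigr)\le\alpha^2\E_{(0,y)}\bigl(e^{-r\tau}\|X_\tau\|^2\bigr)\le\alpha^2y^2=g(0,y)$, so $(0,y)\in\S$; (b) $(\alpha^2y,0)\in\S$, obtained by pairing each $\tau$ with a stopping time $\tilde\tau$ such that $(\tau,W^{(1)}_\tau,W^{(2)}_\tau)$ and $(\tilde\tau,W^{(2)}_{\tilde\tau},W^{(1)}_{\tilde\tau})$ are equal in law and comparing $q_\tau(\alpha^2y,0)$ with $q_{\tilde\tau}(0,y)$ using $\alpha\ge1$; (c) the superadditivity $q_\tau(x,y)\le q_\tau(x,0)+q_\tau(0,y)$, which in your notation is immediate from $k_\tau\ge0$. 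Without (b) and (c) the stated form of (vi) remains unproved.
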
}
{The proof can be found in the appendix.}
{Following the lines of Subsection \ref{subsec:prel} (see \cite[Section 3.3]{christensen-salminen} or in the corresponding finite time problems), it is now not difficult to check that the value function $\Vr$ is indeed of the form
\begin{align*}
\Vr(x,y)=\int_{\S} G_r((x,y),(x',y'))\left(r-\LL\right)g(x',y')dx'dy'.
\end{align*}
Therefore, also the Green kernel equation holds true.}{
To apply the general theory developed above, we need to specify a suitable class $\SP$ of sets containing the stopping set. The previous lemma suggests to take the class of all $U$ such that
\begin{enumerate}[($\SP$1)]
\item $U$ is closed.
\item $\Neg\subseteq U^c$.
\item $U^c$ is bounded.
\item For any $(x,y)\in U$ there exists a cone with vertex $(x,y)$, which is a subset of $U$.
\item $U^c$ is star-shaped with origin as a center.
\end{enumerate}
}{ Indeed, it is straightforward to check that the set $\SP$ fulfills the assumptions of Theorem \ref{thm:uniqueness_green}. Therefore, we know that $\S$ is the only set in $\SP$ such that for all $(x,y)\in \S$
\begin{align*}
0=\int_{\S^c} G_r((x,y),(x',y'))\left(r-\LL\right)g(x',y')dx'dy'.
\end{align*}
Now, putting pieces together, Corollary \ref{cor:unique} yields
\begin{proposition}
The stopping set $\S$ of the stopping problem for quadratic reward is the unique set in class $\SP$ such that
\begin{equation}\label{eq:char_quadr}
\int_{\S^c}e^{{\aaa}\cdot(x,y)}(r-\LL)g(x,y)dxdy=0,\text{ for all ${\aaa}\in\R^2$ such that $||{\aaa}||^2=2r$},
\end{equation}
where { $(r-\LL)g(x,y)=r(x^2+\alpha^2y^2-\beta^2$). }
\end{proposition}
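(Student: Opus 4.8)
The plan is to assemble the tools built up in this section: the statement is essentially the specialization of Theorem~\ref{thm:uniqueness_green} combined with Corollary~\ref{cor:unique}, so the work consists in checking that all hypotheses are met for this concrete $g$ and this concrete class $\SP$. First I would record the computation of the obstacle function: since $W$ is a standard Wiener process, $\LL=\tfrac12\Delta$, so $\LL g\equiv 1+\alpha^2$ for $g(x,y)=x^2+\alpha^2y^2$, hence $(r-\LL)g(x,y)=r(x^2+\alpha^2y^2)-(1+\alpha^2)=r(x^2+\alpha^2y^2-\beta^2)$ because $\beta^2=(1+\alpha^2)/r$. Set $f:=(r-\LL)g$; it is a polynomial, hence continuous and $C^2$ on all of $\R^2$, so the smoothness hypotheses in Theorems~\ref{thm:uniqueness_green},~\ref{thm:nessecary} and~\ref{thm:suff} are automatic. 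The inversion formula $g(x)=\int_{\R^2}f(y)\Gr(x,dy)$ and the Green-kernel representation $\Vr(x)=\int_{\S}f(y)\Gr(x,dy)$ of the value function (hence the Green kernel equation \eqref{eq:zeroC1} for $\S$) are exactly those recalled just before the statement, obtained via the smooth-fit argument of \cite[Section~3.3]{christensen-salminen}; and $\S\in\SP$ by Lemma~\ref{lem:stop_prop}. I would also note that every $U\in\SP$ is connected: since $U^c$ is bounded and star-shaped with centre $0$, each point of $U$ lies on a ray from $0$ whose tail is contained in $U$ and reaches the exterior of a ball containing $U^c$; that exterior lies in $U$ and is connected, so $U$ is connected, and in particular $\SP$ is of the type required by Corollary~\ref{cor:unique}.

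Next I would verify the two assumptions on $\SP$ in Theorem~\ref{thm:uniqueness_green}. Assumption \eqref{class:a} is immediate: $\Neg\subseteq U^c$ means $U\subseteq\Neg^c=\{f>0\}$. Assumption (\ref{class:b}(i)): suppose $U\not\subseteq U'$ and pick $p\in U\setminus U'$; by the cone property of $U$ there is a solid cone $K\subseteq U$ with apex $p$, and since $U'$ is closed with $p\notin U'$, the open set $\operatorname{int}(K)\cap B(p,\varepsilon)$ is nonempty and contained in $\operatorname{int}(U)\setminus U'$. As both $U$ and $U'$ contain the exterior of a large ball, $\operatorname{int}(U)\cap\operatorname{int}(U')\neq\emptyset$, and $\operatorname{int}(U)$ is connected; hence from a point $x$ there the Brownian path follows, with positive probability, a tube inside $\operatorname{int}(U)$ to that open set and accumulates a strictly positive amount of Lebesgue time in it before it can reach $U^c$, contradicting the hypothesis of (\ref{class:b}(i)), so $U\subseteq U'$. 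Assumption (\ref{class:b}(ii)) is handled symmetrically: if $p\in U'\setminus U$, then $p\in U'\cap U^c$ with $U^c$ open, and the cone property of $U'$ at $p$ yields a nonempty open set inside $U^c\cap\operatorname{int}(U')$; since $U^c$ is open, bounded, star-shaped and connected, Brownian motion started at any $x\in U^c$ reaches this set through a tube inside $U^c$ with positive probability, spending positive Lebesgue time in $U'$ before hitting $U$, a contradiction, so $U'\subseteq U$.

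With these checks in place, Theorem~\ref{thm:uniqueness_green} gives that $\S$ is the unique set of $\SP$ satisfying \eqref{eq:assume}; by Lemma~\ref{multimainlemma} (the integrals $W$ and $g$ agree on all of $U$ as soon as they agree on $\partial U$) this is precisely the assertion that $\S$ is the unique solution in $\SP$ of the Green kernel equation \eqref{eq:zeroC1}. Finally, since $g$ is $C^2$ and $\SP$ consists of connected sets with bounded complement, Corollary~\ref{cor:unique} applies and transfers the uniqueness to the Martin kernel equation \eqref{eq:zeroC2}, which is \eqref{eq:char_quadr} with $(r-\LL)g=r(x^2+\alpha^2y^2-\beta^2)$ as computed above.

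The step I expect to be the real obstacle is the verification of assumption \eqref{class:b} of Theorem~\ref{thm:uniqueness_green}, i.e.\ converting the almost-sure statements about Brownian occupation times into honest set inclusions. This rests on three ingredients that must be combined carefully: the cone property, which guarantees that any point of a symmetric difference $U\setminus U'$ (or $U'\cap U^c$) carries a genuinely $d$-dimensional open neighbourhood of itself inside that set; the boundedness and star-shapedness of the complements, which supply the connectivity needed for a Brownian tube to reach that neighbourhood without first leaving the relevant region; and the potential-theoretic fact that planar Brownian motion spends a strictly positive amount of Lebesgue time in any nonempty open set it enters. Pinning down the regularity and boundary details in the last two points — for instance choosing the starting point $x$ in the interior of the relevant set so that the exit time is genuinely positive — is where the care is needed; everything else is bookkeeping with results already established above.
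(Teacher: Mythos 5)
Your proposal is correct and follows essentially the same route as the paper: establish the Green kernel representation of $\Vr$ via smooth fit, verify that the class $\SP$ satisfies the hypotheses of Theorem~\ref{thm:uniqueness_green} (a step the paper dismisses as ``straightforward to check'' but which you carry out explicitly via the cone property and an occupation-time argument), and then transfer uniqueness from the Green kernel equation to the Martin kernel equation via Corollary~\ref{cor:unique}. The only difference is one of detail, not of method — your verification of assumption~\eqref{class:b} and of the connectedness of each $U\in\SP$ fills in exactly the gaps the paper leaves to the reader.
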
}
In order to implement a numerical procedure to find the set $\S$, we change to affine polar coordinates
$
x=\rho\cos\theta,
\alpha y=\rho\sin\theta,
$
in \eqref{eq:char_quadr}. The negative set becomes
$$
\Neg=\Big\{(\theta,\rho)\colon 0\leq\theta<2\pi, 0\leq\rho\leq
\beta\Big\}.
$$
We parametrize the continuation set as
$$
\C=\Big\{(\theta,\rho)\colon 0\leq\theta<2\pi, 0\leq\rho\leq \rho(\theta)
\Big\}
$$
where $\rho(\theta)\geq \beta$ is an unknown curve.
Equation
\eqref{eq:char_quadr} is then
\begin{equation*}
\int_0^{2\pi}\int_0^{\rho(\theta)}e^{\sqrt{2r}(\rho \cos(\theta) \cos\phi+(\rho/\alpha) \sin(\theta) \sin\phi)}(r-\LL)g(\rho,\theta)\frac1{\alpha}\rho\ d\rho\ d\theta=0,
\end{equation*}
that, after some rewriting, gives
\begin{equation}\label{eq:integrals}
\int_0^{2\pi}F_2(\rho(\theta),\gamma(\theta,\phi))d\theta=
\int_0^{2\pi}F_1(\gamma(\theta,\phi))d\theta,\ 0\leq\phi\leq2\pi.
\end{equation}
where
\begin{align*}
\gamma(\theta,\phi)&=\sqrt{2r}(\cos(\theta) \cos\phi+(1/\alpha) \sin(\theta) \sin\phi),\\
F_1(\gamma)&
=
\frac{-2 \, {\left(\gamma^{2} \beta^{2} - 3 \, \gamma \beta + 3\right)} e^{\gamma \beta}}{\gamma^{4}} + \frac{\gamma^{2} \beta^{2} - 6}{\gamma^{4}},
\\
F_2(\rho,\gamma)&
=\frac{2 \, {\left(\beta^{2} \gamma^{2} - 3 \, \beta \gamma + 3\right)} e^{\beta \gamma}}{\gamma^{4}} \\
&\quad\quad
+ \frac{{\left({\left(\beta^{2} \rho - \rho^{3}\right)} \gamma^{3} - {\left(\beta^{2} - 3 \, \rho^{2}\right)} \gamma^{2} - 6 \, \gamma \rho + 6\right)} e^{\gamma \rho}}{\gamma^{4}}.
\end{align*}
Then, given a positive integer $N$, we discretize \eqref{eq:integrals}, denoting $\theta_i=i/(2\pi N)\ (0\leq i < N)$ and,
in order to determine the unknowns $\rho_i=\rho(\theta_i)$ we impose the same number of conditions,
for values $\phi_j=j/(2\pi N)\ (0\leq j< N)$. We then have the discretized problem
$$
\sum_{i=0}^{N-1}F_2(\rho_i,\gamma(\theta_i,\phi_j))=
\sum_{i=0}^{N-1}F_1(\gamma(\theta_i,\phi_j)),\quad 0\leq i,j< N.
$$
The vector $\rho_i$ is found by a gradient algorithm. In Figure \ref{figure:1} we plot the solutions found for $\alpha=2$ and $r=1$ and $r=0.3$, resp., considering $N=64$. The solution for $\alpha=1/2$ is, as expected, just a 90 degrees rotation of the one with $\alpha = 2$. The solution we found for $\alpha = 1/3$ and $r=1$ is presented in Figure \ref{figure:alphalessone}.
\begin{center}
\begin{figure}
\includegraphics[scale=0.3]{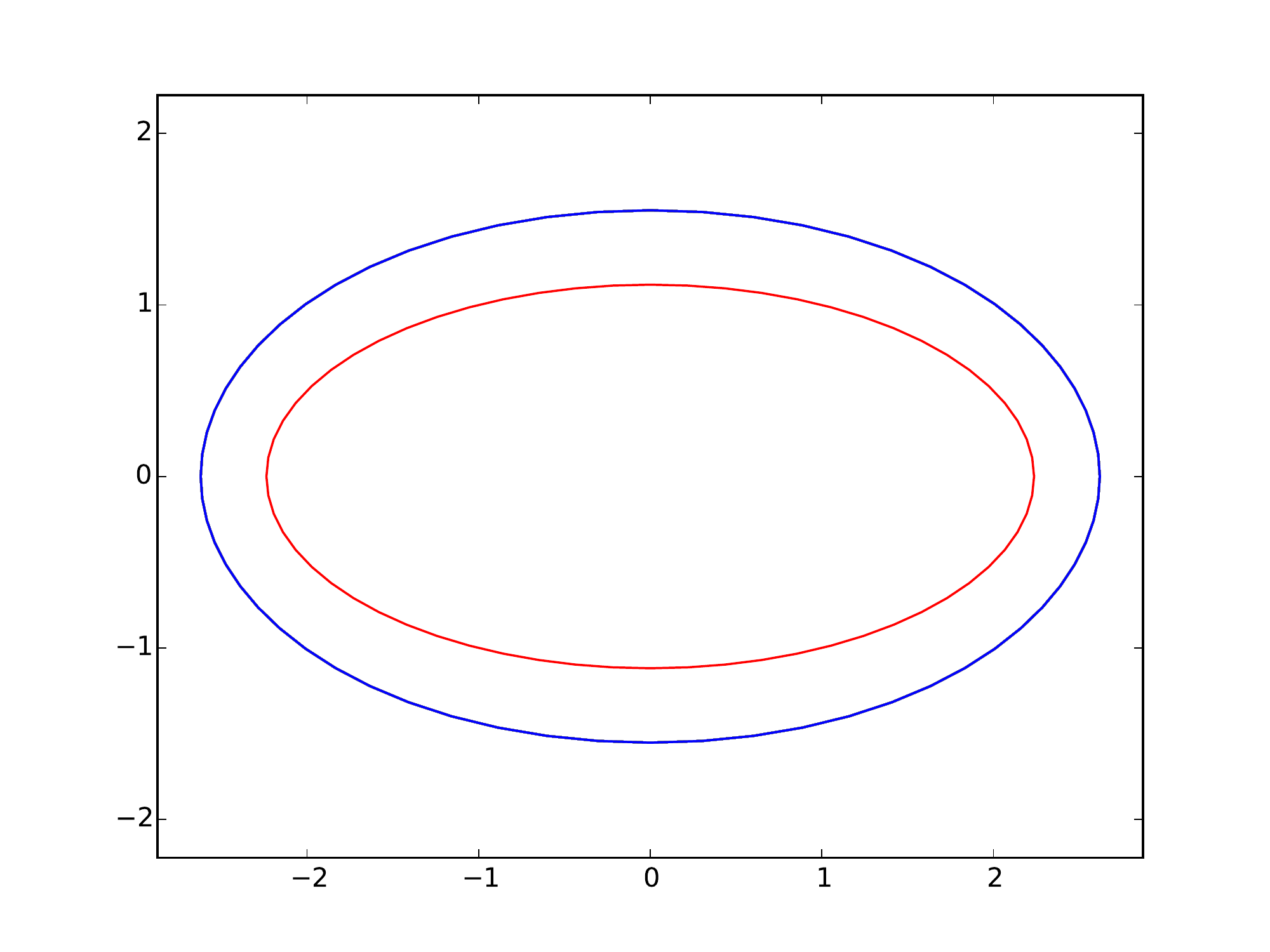}
\includegraphics[scale=0.3]{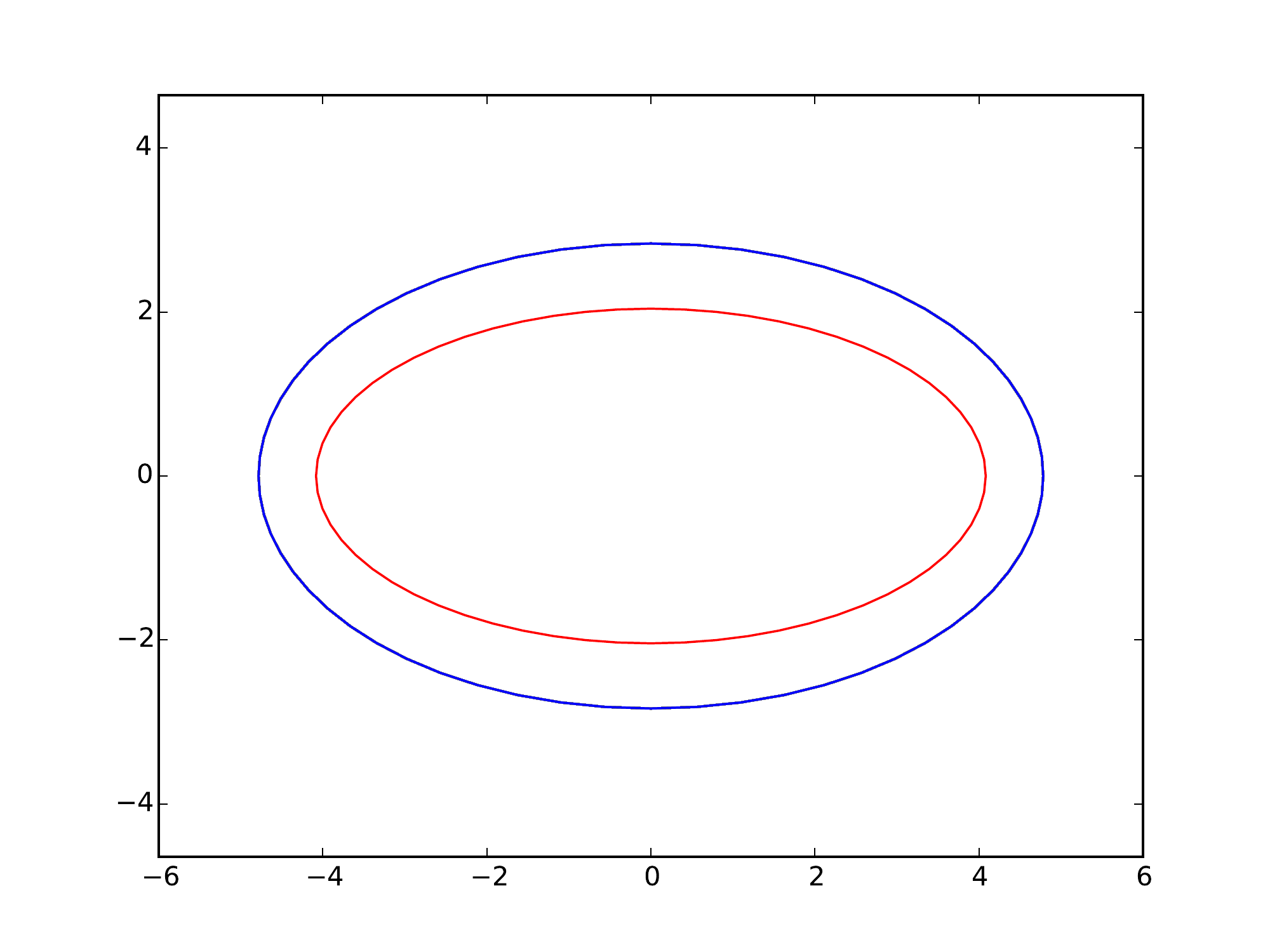}
\caption{Example in \ref{subsec:quadr} with $\alpha = 2$ and with $r=1$ (left) and $r=0.3$ (right): The bounded region delimited in blue is the optimal continuation region. In the region delimited in red $(r-\LL)g$ is negative.}
\label{figure:1}
\end{figure}
\end{center}

\begin{center}
\begin{figure}
\begin{center}
\includegraphics[scale=0.3]{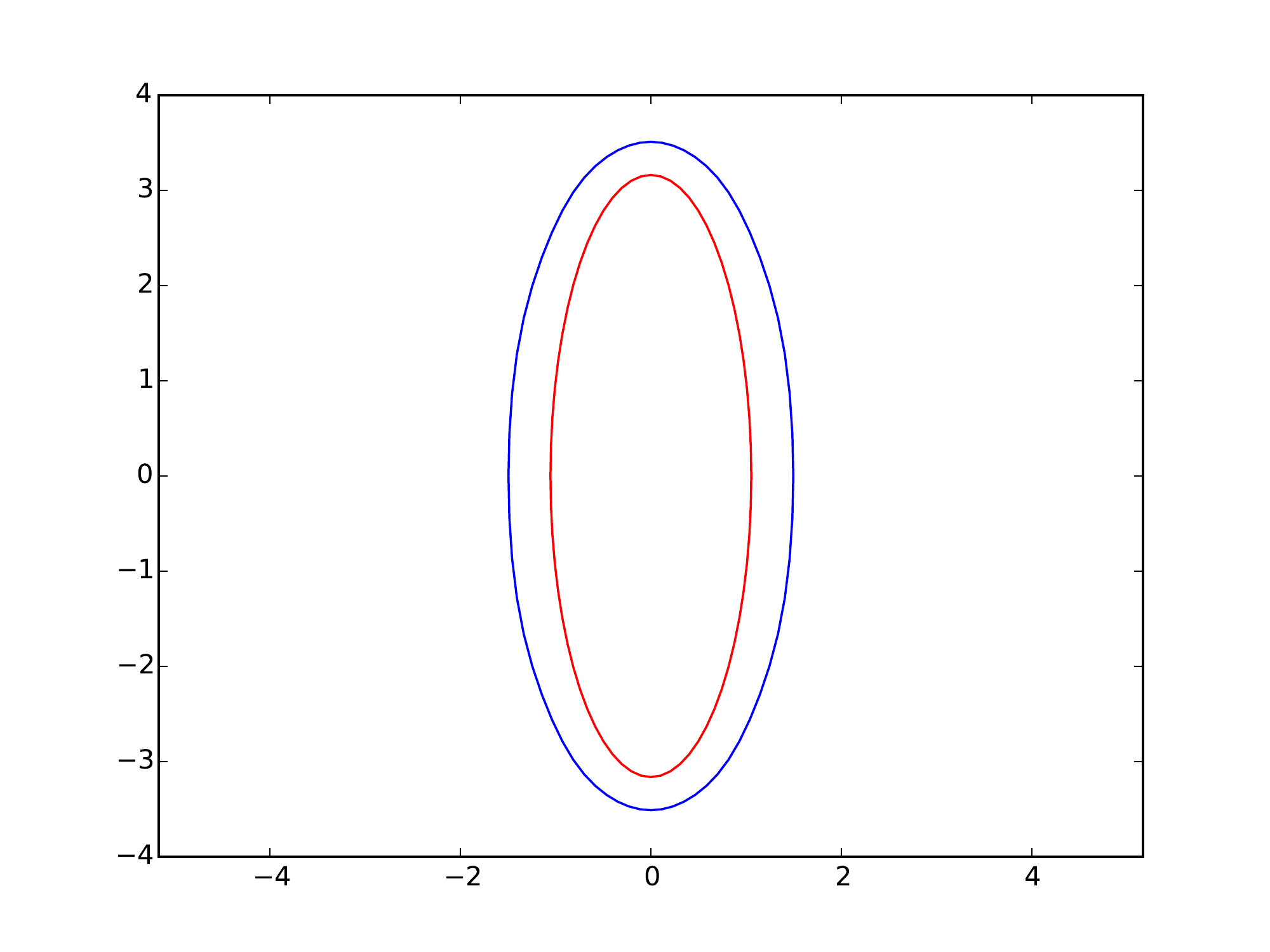}
\caption{Example in \ref{subsec:quadr} with $\alpha = 1/3$ and $r=1$ : The bounded region delimited in blue is the optimal continuation region. In the region delimited in red $(r-\LL)g$ is negative.}
\label{figure:alphalessone}
\end{center}
\end{figure}
\end{center}

\subsection{The $d$-dimensional quadratic reward}
For quadratic reward functions $g$ for general $d$-dimensional Wiener process,
a similar analysis can be carried out. The adaptation to, say, $d=3$ is not hard but the computational resources have to be handled more carefully.
The quadratic reward considered here has the advantage that numerical methods can be checked for the symmetric reward $x^2+y^2+z^2$. 
For this problem, the analytic solution is known to be a sphere of radius $w/\sqrt{2r}$ where $w$ satisfies $\tanh(w)=w/3$, analogous to the two-dimensional case discussed in Subsection \ref{subsec:symmetric}.
Figures \ref{fig:3d} show the solution obtained for the payoff $g(x,y,z)=x^2+4y^2+9z^2$, with $r=1$,
and exhibit the corresponding negative set $\Neg$ and continuation region $\C$.
The negative set in this case is
$$
\Neg=\{(x,y,z)\colon\rl g\leq 0\}=\{(x,y,z)\colon x^2+4y^4+9z^2\leq 14\}.
$$

\begin{center}
\begin{figure}
\begin{tabular}{cc}
\includegraphics[scale=0.3]{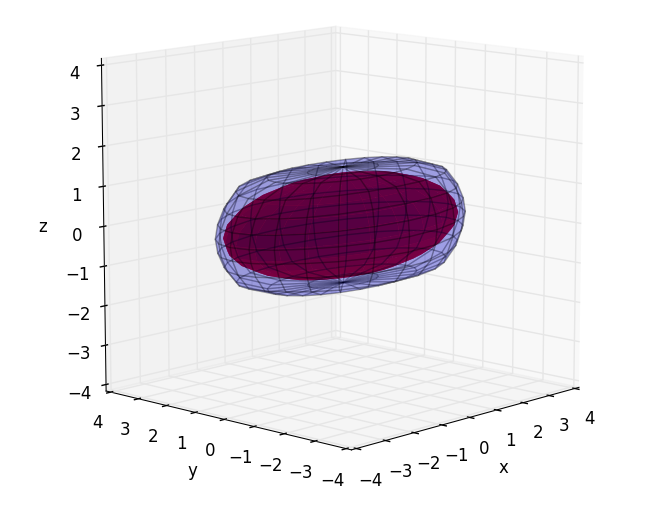} &
\includegraphics[scale=0.3]{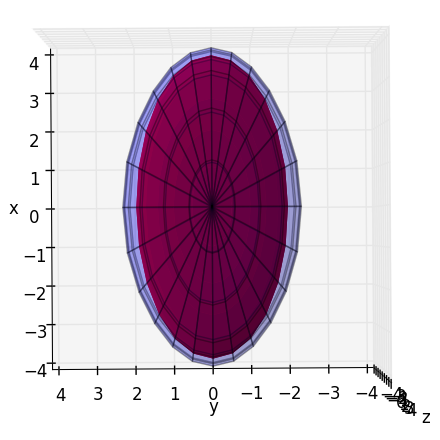}\\
\includegraphics[scale=0.3]{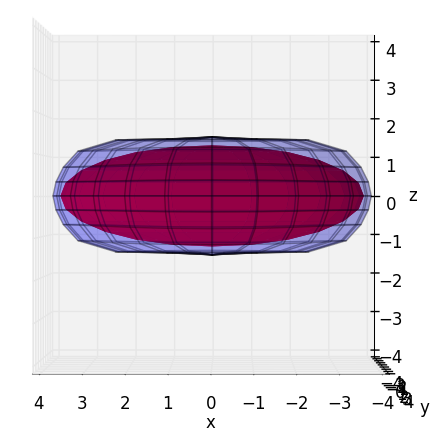}&
\includegraphics[scale=0.3]{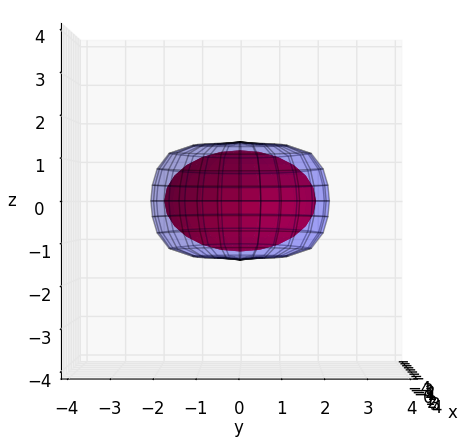}

\end{tabular}

\caption{The negative set $\Neg$ (in dark) is inside the continuation set $\C$.}\label{fig:3d}
\end{figure}
\end{center}



\begin{appendix}

{\section{Proofs of Lemma \ref{harmonic}, Lemma \ref{multimainlemma}, \autoref{theorem:general}, Corollary \ref{mainmulti}, and
    Lemma \ref{lem:stop_prop}}
}
\begin{proof}[Proof of Lemma \ref{harmonic}]

	Let $\tau$ be a stopping time.
	By the definition of $\Gr$ we obtain
	\begin{align*}
		\Gr(x,H)&=\Ex{x}{\int_0^{\infty} \er{t}\indicator{H}(X_t)dt}\\
		&=\Ex{x}{\int_0^{\tau} \er{t}\indicator{H}(X_t)dt }
		  +\Ex{x}{\int_{\tau}^{\infty} \er{t}\indicator{H}(X_t)dt },
	\end{align*}
It remains to prove that the second term is equal to $\Ex{x}{\er{\tau} \Gr(X_{\tau},H )}$. The following chain of easily verified equalities completes the proof:
	\begin{align*}
		\Ex{x}{\int_{\tau}^{\infty} \er{t}\indicator{H}(X_t)dt}
		&=\Ex{x}{e^{-r \tau}\int_{0}^{\infty} \er{t}\indicator{H}(X_{t+\tau})dt }\\
		&=\Ex{x}{\er{\tau} \Ex{x}{ \int_{0}^{\infty} \er{t}\indicator{H}(X_{t+\tau})dt \big| \F_{\tau}}}\\
		&=\Ex{x}{\er{\tau} \Ex{X_{\tau}}{\int_{0}^{\infty} \er{t}\indicator{H}(X_{t})dt}}\\
		&= \Ex{x}{\er{\tau} \Gr(X_{\tau},H)}.
	\end{align*}
\end{proof}

\begin{proof}[Proof of Lemma \ref{multimainlemma}]
First observe that from the fact that $g(x)=\Wr(x)$ for all $x\in \partial S$ it follows that
\begin{equation}
\label{eq:auxborde}
g(x)-W(x)=\int_{S^c} f(y)\Gr(x,dy)=0.
\end{equation}
We can now move on to prove
 that $g(x)=\Wr(x)$ for $x\in S$. To verify this, we notice that for
 all $x$
\begin{align*}
g(x)-W(x)&=\int_{S^c} f(y) \Gr(x,dy).
\end{align*}
From Lemma \ref{harmonic} we deduce that for $x\in S$
\begin{equation*}
 \int_{S^c} f(y) \Gr(x,dy) = \Ex{x}{e^{-r \hit{S^c}} \int_{S^c} f(y)  \Gr(X_{\hit{S^c}},dy)}.
\end{equation*}
   Bearing \eqref{eq:auxborde} in mind, the integral inside the expected value above on the right-hand side vanishes, as $X_{\hit{S^c}} \in \partial{S}$ by the continuity of the sample paths, thus completing the first part of the proof.
The validity of \eqref{eq:WA} is a direct consequence of Lemma \ref{harmonic}.
\end{proof}

\begin{proof}[Proof of \autoref{theorem:general}]

Applying Lemma \ref{multimainlemma} with $\Wr:=\tildeVr$, we get
$\tildeVr(x)=g(x)$ for $x\in S$, which in addition to the hypothesis
$\tildeVr(x)\geq g(x)$ for all $x\in S^c$, yields that $\tildeVr$ is a
majorant of the reward. By the definition of $\tildeVr$, see
(\ref{rie}), and taking into account that $f$ is a non-negative
function in $S$, we furthermore deduce evoking the Ries representation
that $\tildeVr$ is an $r$-excessive function. By the general theory we therefore have that $\tildeVr \geq \Vr.$
Furthermore by Lemma \ref{multimainlemma}, we have
\begin{align*}
\tildeVr(x)&=\Ex{x}{e^{-r \hit{S}}g(X_{\hit{S}})}
\leq \sup_\tau \Ex{x}{e^{-r \tau}g(X_\tau)},
\end{align*}
and conclude that the desired equality holds.
\end{proof}

\begin{proof}[Proof of Corollary \ref{mainmulti}]
First observe that the function $g$ satisfies the conditions for Dynkin's characterization
(see 1.2.1 in \cite{crocce}).
Since $g$ is in the domain of the extended infinitesimal generator associated with the killed process and the additional hypotheses we made about $g$, we know that 
\begin{equation*}
g(x)=\int_{\states}\rl g(y) \Gr(x,dy)\qquad (x\in \states).
\end{equation*}
Therefore, we are in conditions to apply the previous theorem with $f(y):=\rl g(y)$ to complete the proof.
\end{proof}

{\begin{proof}[Proof of Lemma \ref{lem:stop_prop}]
(i) holds as the the Wiener process is a strong Feller standard Markov
    process and therefore the $r$-excessive functions are lower
    semicontinuous,  see \cite[p.77]{blumenthalgetoor68}.
(ii) hold by the general theory of optimal stopping.\\
For (iii) note that both the distribution of $X$ and the reward
function are symmetric with respect to the Cartesian axes. Hence, so is the value function and therefore also the optimal stopping set. \\
To prove (iv) and (v), first note that by (iii),
it is enough to prove the claims for points in the first quadrant. 
We now use the explicit starting state-dependence of $X$. For all stopping times $\tau$ and all $(x,y)$ it holds that
\begin{align*}
q_\tau(x,y):=&\E\left(e^{-r\tau}g(x+W^{(1)}_\tau,y+W^{(2)}_\tau)\right)-g(x,y)\\
=&(x^2+\alpha^2y^2)\big(\E(e^{-r\tau})-1)+2x\E(e^{-r\tau}W^{(1)}_\tau\big)\\&
+2\alpha^2y\E(e^{-r\tau}W^{(2)}_\tau)+\E(e^{-r\tau}({W^{(1)}_\tau}^2+\alpha^2{W^{(2)}_\tau}^2))\\
=&-a_\tau\left((x+d_{1,\tau})^2+\alpha^2(y+d_{2,\tau})^2-c_\tau\right)
\end{align*}
for some constants $a_\tau,\,c_\tau\geq 0,\,d_{1,\tau},\;d_{2,\tau}\in\R$, which depend on $\tau$. Moreover, note that $q_\tau(0,0)\geq0$ for all $\tau$. Using this notation, $(x,y)$ is in the optimal stopping set iff $q_\tau(x,y)\leq 0$ for all $\tau$. Taking complements, this means that $\C$ is a union of elliptic discs $q_\tau(x,y)>0$ containing $(0,0)$, whose axes are parallel to the Cartesian axes. \\
Hence, a point $(x,y),$ $x,y\geq 0$, is in the stopping set $\S$ if $(x,y)$ is outside of all ellipses, i.e. $q_\tau(x,y)\leq0$ for all $\tau$. But then so are all points in a suitable cone with vertex $(x,y)$, which proves (iv).\\
If, on the other hand, $(x,y)\in \C$, then there exists $\tau$ such that $(x,y)$ is inside the ellipse, i.e. $q_\tau(x,y)> 0$. Since, furthermore, the origin is also inside this ellipse, $(v)$ clearly holds.\\
Finally, consider claim (vi) and assume without loss of generality that $\alpha\geq 1$. By the symmetry property (iii), it is enough to establish the boundedness of $\C$ in the first quadrant. First, recall that the continuation set for the symmetric problem $\alpha=1$ is a circular disc, see Subsection \ref{subsec:symmetric} . Therefore, for $y>0$ large enough, we know that for all stopping times $\tau$
\begin{align*}
\E_{(0,y)}(e^{-r\tau}g(X_\tau))\leq \alpha^2\E_{(0,y)}(e^{-r\tau}({W^{(1)}_\tau}^2+{W^{(2)}_\tau}^2))\leq \alpha^2(0^2+y^2)=g(0,y),
\end{align*}
so that $(0,y)\in \S$. Next, we establish that also $(\alpha^2 y,0)\in \S$: For all stopping times $\tau$, it holds
\begin{align*}
q_\tau(0,y)&=\alpha^2(\E e^{-r\tau}-1)y^2+2\alpha^2\E(e^{-r\tau}W^{(1)}_\tau)y+d_\tau\\
q_\tau(\alpha^2y,0)&=\alpha^4(\E e^{-r\tau}-1)y^2+2\alpha^2\E(e^{-r\tau}W^{(2)}_\tau)y+d_\tau.
\end{align*}
Now, fix any stopping time $\tau$ and denote by $\tilde\tau$ a corresponding stopping time such that $(\tau,W^{(1)}_\tau,W^{(2)}_\tau)$ has the same law as $({\tilde\tau},W^{(2)}_{\tilde\tau},W^{(1)}_{\tilde\tau})$. As $(0,y)\in \S$, it holds that $q_{\tilde\tau}(0,y)\leq 0$ and as $\alpha\geq 1$ this also implies that
\begin{align*}
  q_\tau(\alpha^2y,0) &\leq \alpha^2(\E e^{-r\tau}-1)y^2+2\alpha^2\E(e^{-r\tau}W^{(2)}_\tau)y+d_\tau \\
  &= \alpha^2(\E e^{-r\tilde\tau}-1)y^2+2\alpha^2\E(e^{-r\tilde\tau}W^{(1)}_{\tilde\tau})y+d_{\tilde\tau}\\
  &= q_{\tilde\tau}(0,y)\leq 0,
\end{align*}
i.e. $(\alpha^2 y,0)\in \S$ for $y$ large enough.
Next, we consider an arbitrary $(x,y)$. Then
\begin{align*}
  q_\tau(x,y) &= q_\tau(x,0)+q_\tau(0,y)-\E(e^{-r\tau}({W^{(1)}_\tau}^2+\alpha^2{W^{(2)}_\tau}^2)) \\
  &\leq  q_\tau(x,0)+q_\tau(0,y),
\end{align*}
so for $x,y$ large enough, it holds that $q_\tau(x,y)\leq 0$ for all $\tau$, i.e. $(x,y)\in S$, implying that $\C$ is bounded.

\end{proof}}
\end{appendix}
\bibliographystyle{abbrv}
\bibliography{multidimensional-osp}
\end{document}